\newtheorem{assumption}{Assumption}
\newtheorem{theorem}{Theorem}[section]
\newtheorem{lemma}[theorem]{Lemma}
\theoremstyle{definition}
\theoremstyle{remark}
\newtheorem{remark}[theorem]{Remark}
\numberwithin{equation}{section}
\numberwithin{equation}{section}
\begin{document}

\makeatletter
\renewcommand{\email}[2][]{%
  \ifx\emails\@empty\relax\else{\g@addto@macro\emails{,\space}}\fi%
  \@ifnotempty{#1}{\g@addto@macro\emails{\textrm{(#1)}\space}}%
  \g@addto@macro\emails{#2}%
}
\makeatother

\title[] 
{A Generalized Energy-Based Adaptive Gradient Method for Optimization}

\author{Lin Feng $^{\dagger}$}
\email{zero3829@iastate.edu}
\author{Hailiang Liu$^\dagger$}
\address{$^\dagger$Department of Mathematics, Iowa State University, Ames, IA 50011}
\email[Corresponding author]{hliu@iastate.edu}
\subjclass{Primary 90C30; Secondary 90C26, 65K05}
\keywords{Adaptive gradient methods; Energy stability; Step size adaptation; Nonconvex optimization; Convergence analysis; Kurdyka–Łojasiewicz (KL) inequality.}
\date{}

\begin{abstract}
Adaptive Gradient Descent with Energy (AEGD), introduced in \cite{liu_aegd_2025},  is a variant of Gradient Descent (GD) designed to address step size sensitivity through 
an energy-based formulation. 
AEGD is notable for its unconditional energy stability, ensuring convergence in energy regardless of the initial step size.   
In this work, we propose the Generalized Energy-Based Adaptive Gradient (gAEGD) method, which  extends AEGD by generalizing the energy function beyond the square root form to  a broader class of functions. We prove that gAEGD retains the unconditional energy stability property, remains robust to step size selection, and exhibits a two-phase adaptive dynamic: the effective step size first adjusts adaptively, then stabilizes within a range that guarantees decay of the objective function values. We establish an optimal convergence rate of $O\left(\frac{1}{k}\right)$ for finding an $\epsilon$-stationary point, along with improved convergence rates for the objective gap under a local Kurdyka–\L ojasiewicz (KL) condition. {Empirical results support the theoretical analysis and indicate that the generalized energy-based approach preforms effectively and reliably   for a broad range of optimization problems.}
\end{abstract}

\maketitle

\medskip

\section{Introduction}
This work continues the project initiated in \cite{liu_aegd_2025}, which aims to develop efficient energy-based adaptive gradient methods for optimization problems. In this paper, we focus on the unconstrained minimization problem:
\begin{align}
    \min_{x \in \mathbb{R}^d} f(x),
\label{target}
\end{align}
where function $f$ is differentiable and bounded below by $f^* := \inf\limits_{x\in \mathbb{R}^d}f(x)$. 
Despite its simple appearance,  solving this problem can be challenging --  particularly in non-convex or ill-conditioned settings -- where the selection of step sizes 
plays a crucial role. Building upon the previous work, we address  the problem of adapting step sizes within the same framework. 
A general discussion of this issue, along with background references,  is provided  in the introduction of  \cite{liu_aegd_2025}.

AEGD (Adaptive Gradient Descent with Energy) \cite{liu_aegd_2025} is an adaptive method designed specifically for the minimization problem above. It incorporates  an energy-based formulation by defining 
$$
F(x) = \sqrt{f(x) + c},
$$
where $c \in \mathbb{R}$ is chosen such that $f^* + c > 0$. The algorithm proceeds with the following updates:  
\begin{subequations}
\begin{align}
    r_{k+1} &= \frac{r_k}{1+2\eta \|\nabla F(x_k)\|^2},\\
    x_{k+1} &= x_{k} - 2\eta r_{k+1}\nabla F(x_k), 
\end{align}
\label{AEGD}
\end{subequations}
where $\nabla F_k = \frac{\nabla f(x_k)}{2\sqrt{f(x_k) + c}}$, and $r_k$ is the energy parameter,  initialized as  $r_0 = F(x_0)$.

A key feature of AEGD is its dynamic step size, governed  by the energy variable $r_k$, 
which ensures unconditional energy stability -- a guarantee of energy descent regardless of the choice of step size $\eta>0$. 
The effective step size at iteration $k$ is given by  
$$
\eta_k = \eta\frac{r_{k+1}}{\sqrt{f(x_k) + c}} = \eta\frac{r_{k+1}}{F(x_k)}.
$$
 
In this work, we extend AEGD to incorporate a broader class of energy functions beyond the square root energy function. Specifically,  
let $\hat{F}: \mathbb{R}^+\to \mathbb{R}^+$ be a smooth, strictly increasing, and concave function. We then define a generalized AEGD (called gAEGD) 
with the following updates: 
\begin{subequations}
\begin{align} 
    r_{k+1} &= \frac{r_k}{1+\eta\frac{F'_k}{F_k}\|\nabla f(x_k)\|^2},\\
    x_{k+1} &= x_{k} - \eta \frac{r_{k+1}}{F_k}\nabla f(x_k),
\end{align}
\label{updating rule0}
\end{subequations}
where the auxiliary terms are defined as
$$
F_k := \hat{F}(f(x_k) + c), \quad F'_k := \hat{F}'(f(x_k) + c),
$$
and $c\in \mathbb{R}$ is chosen such that $f^*+c>0.$ A detailed derivation of such generalized AEGD method is presented in Section \ref{extension of AEGD}.

This scheme reduces to original AEGD  when $\hat{F}(s) = \sqrt{s}$,
and retains the advantages of energy stability and adaptive 
stepping. Notably, setting $\hat{F}(s) = log(s+1)$ yields a variant we call Adaptive Logarithmic Energy Gradient Descent (ALEGD), which shows improved empirical performance.  

We present a comprehensive theoretical analysis for the proposed generalized method, including stability, convergence, and complexity analyses. In particular, we demonstrate that the method adaptively adjusts its effective  step size to guarantee descent or maintain stability. In addition to theoretical insights, we empirically validate the effectiveness of the algorithm on a range of convex and non-convex optimization problems.

\subsection{Main Contribution.}
Our key contributions are summarized below: 
\begin{itemize}
    \item \textbf{Generalized energy-based framework.} 
    We extend AEGD by  replacing the specific  choice $\hat{F}(s) = \sqrt{s}$ with a broader class of smooth, strictly increasing, and concave functions.  This generalization yields a flexible family of algorithms with unconditional stability and other desired properties. 
    
    \item \textbf{Unconditional energy stability.} We prove that gAEGD preserves the unconditional energy stability property: the energy parameter $r_k$ decreases in $k$ and converges to a finite limit $r^*$ as $k \rightarrow \infty$, regardless of the initial step size $\eta$. 

    \item \textbf{Adaptive two-stage behavior.} We identify a distinct two-stage behavior inherent in the method. For any desired accuracy $\epsilon$, one of two outcomes occurs within a finite number of iterations: either the gradient norm $\|\nabla f(x_k)\|^2$ falls below  $\epsilon$, or the energy parameter $r_k$ drops below any given positive threshold. This adaptive mechanism ensures that the effective step size $\eta_k$ first adjusts adaptively,  and then stabilizes within a range that guarantees the decay of the function values.      
    \item \textbf{Non-asymptotic convergence guarantees.} Under mild conditions on the energy function $\hat{F}$, including smoothness, monotonicity, concavity, and asymptotic decay --  
    we establish a non-asymptotic convergence result. Specifically, we show
    $$
    \min_{k \leq N} \|\nabla f_k\|^2 < \epsilon
    $$
    within at most $ O(\frac{1}{\epsilon})$ iterations, for any step size $\eta > 0$ and initial energy $r_0 > 0$, provided $c$ is  sufficiently large.

    \item \textbf{Convergence rates under the local KL condition.} When the objective function $f$ satisfies a local Kurdyka–\L ojasiewicz (KL) condition, we show that   the iterates $\{x_k\}$  converges to either a saddle point or a local minimizer of $f$, and we derive explicit convergence rates for the objective gap based on the KL exponent. 

    \item \textbf{Empirical validation and evaluation.} 
    We validate the proposed framework through experiments on standard convex and non-convex benchmarks, confirming key theoretical properties such as its two-stage adaptive behavior. Across a set of applications, the results demonstrate the effectiveness and robustness of the generalized energy-based approach for a broad range of optimization problems. Among the energy functions considered, the logarithmic form  $\hat{F}(s) = \log(s+1)$ consistently outperforms AEGD in many settings.
\end{itemize}

\subsection{Related Works}  
Gradient-based methods have long served as fundamental tools for unconstrained optimization \eqref{target} due to their simplicity and ease of implementation.  Classical gradient descent (GD) ensures convergence under small step sizes, and has been widely analyzed for both convex and non-convex settings.     
For instance, for smooth, non-convex objective functions  with an L-Lipschitz continuous gradient, Nesterov \cite{nesterov_introductory_2004} showed that GD with a fixed step size of $\frac{1}{L}$ can find an $\epsilon$-stationary point at a rate of $O(\frac{1}{k})$.  Carmon et al. \cite{carmon_lower_2020} later proved that this rate is the best possible dimensional-free worst-case convergence rate achievable by first order methods. 

Recent efforts have explored adaptive gradient methods, such as AdaGrad \cite{duchi_adaptive_2011}, RMSProp \cite{hinton_neural_2012} and Adam \cite{kingma_adam_2014}, 
to address key challenges in machine learning and deep learning, such as feature sparsity and sensitivity to learning rate selection. 
These methods dynamically adjust step sizes based on local curvature or accumulated gradient information. 
While effective in practice, many such methods lack rigorous convergence guarantees in general non-convex settings and do not explicitly ensure  energy stability. To address these limitations, various extensions have been proposed -- some aiming to strengthen theoretical convergence guarantees \cite{reddi_convergence_2018, zaheer_adaptive_2018}, while others focus on  enhancing generalization performance \cite{bassily_private_2014, luo_adaptive_2018, zhuang_adabelief_2020}.  

AEGD \cite{liu_aegd_2025} offers a novel approach by adapting the step size through an energy-based framework. A key advantage of AEGD is its unconditional energy stability. Several extensions \cite{liu_adaptive_2022, liu_anderson_2024, liu_adaptive_2025} have been developed to enhance its adaptability and performance in more complex settings. 
 Liu and Tian \cite{liu_adaptive_2022} introduced stochastic gradients and momentum to adapt AEGD for stochastic objectives, while Anderson acceleration was incorporated to speed up convergence \cite{liu_anderson_2024}. More recently, preconditioning techniques have been applied to handle constrained optimization \cite{liu_adaptive_2025}. 
In contrast to these extensions, which rely on external acceleration techniques, we advance AEGD by generalizing the square root energy function to a broader class, resulting in a flexible family of algorithms capable of adapting to various objective types.  
We show that gAEGD achieve the optimal convergence rate  of $O(\frac{1}{k})$ for $\|\nabla f(x_k)\|^2$ (i.e., for finding stationary points) with fewer restrictions on the initial step size $\eta$. Notably, its  convergence remains  robust as long as the parameter $c$ is chosen sufficiently large. 

While these extensions enrich the design of energy-based adaptive methods, achieving not just convergence but a desirable  convergence rate requires additional  structural assumptions on the objective function. 
A widely used condition is the Polyak–\L{}ojasiewicz (PL) condition, originally introduced by Polyak in 1963 \cite{polyak_gradient_1963}. The PL condition relaxes the requirement of strong convexity while still ensuing the linear convergence of gradient descent \cite{karimi_linear_2016}. It has been shown that AEGD achieves the same linear convergence rate for the function value gap $\|f(x_k) - f(x^*)\|$ under the PL condition \cite{liu_aegd_2025}. In this work, we analyze the convergence of gAEGD under a more general assumption: a local Kurdyka-\L{}ojasiewicz (KL) condition given in Section \ref{convergence rates}. First  introduced by \L ojasiewicz in 1963 \cite{lojasiewicz_geometrie_1993}, the KL condition generalizes the PL condition, which it includes as a special case. Its  broader applicability  stems from two key advantages: it is weaker  than the PL condition, and it holds for a wide class of functions found in practical optimization problems \cite{karimi_linear_2016, yue_lower_2023, xiao_alternating_2023, foster_uniform_2018, li_calculus_2018}. Notably, recent studies have shown that the loss landscapes of many  neural networks   satisfy the KL condition in various settings \cite{bassily_exponential_2018, liu_loss_2022, scaman_convergence_2022}, highlighting its usefulness for analyzing convergence in non-convex optimization. 

\subsection{Organization}
In Section \ref{extension of AEGD}, we introduce the gAEGD method and establish its unconditional energy stability. Section \ref{Convergence theory} presents the main theoretical results, including the two-stage adaptive behavior, robustness with respect to the base step size $\eta$, and convergence guarantees in non-convex settings. In Section \ref{convergence rates}, we analyze  convergence rates under a local KL condition. {Section \ref{experimental results} provides empirical evaluation of the proposed framework.} Conclusions and further discussions are given in Section \ref{conclusion}.  
\subsection{Notations}
Throughout the paper,  $\|\cdot\|$ denotes the $l_2$ norm for vectors and matrices. For the objective  function $f$, we write $f_k=f(x_k)$ and $\nabla f_k=\nabla f(x_k)$  to represent its value and gradient at iteration $k$, respectively. The global minimum value of $f$ is denoted by $f^*$.  We use $C^{1,1}_{L}(\mathbb{R}^d)$ to denote the class of functions with $L-$Lipschitz continuous gradients. That is, a function $f \in C^{1,1}_{L}(\mathbb{R^d})$ satisfies  
\begin{align*}
    \| \nabla f(x) - \nabla f(y) \| \leq L\|\ x-y \|,\ \forall x,y \in \mathbb{R}^d.
\end{align*}
This condition is equivalent to the following upper bound on $f$:  
\begin{align*}
    f(x) \leq f(y) + \nabla f(y)^T (y - x) + \frac{L}{2}\|y - x\|^2, \ \forall x,y \in \mathbb{R}^d.
\end{align*}

\section{Method} \label{extension of AEGD}
\subsection{Method derivation}
We begin by rewriting AEGD update  (\ref{AEGD}) as follows: 
\begin{align*}
    x_{k+1} &= x_{k} - 2\eta r_{k+1}\nabla F_k, 
\end{align*}
with the auxiliary update given by 
\begin{align*}
    r_{k+1}-r_k = -2\eta r_{k+1} \|\nabla F_k\|^2 = \nabla F_k \cdot (x_{k+1} - x_{k}). 
\end{align*} 
Recall that the deformed objective function is defined as $F(x) = \sqrt{f(x) + c}$. Therefore, the gradient of $F$ at step $k$ is 
$$
\nabla F_k = \frac{\nabla f(x_k)}{2\sqrt{f(x_k) + c}}=\frac{\nabla f(x_k)}{2F_k}. 
$$ 
Substituting this into the update rule, we obtain 
\begin{align*}
    x_{k+1} - x_{k}
    &= - \eta \frac{r_{k+1}}{F_k}\nabla f_k,
\end{align*}
and 
\begin{align*}
    r_{k+1} - r_{k}
    &= -\eta r_{k+1} \frac{F'_k}{F_k}\|\nabla f_k\|^2. 
\end{align*}
This leads to the following update rule:
\begin{subequations}
    \begin{align}
        x_{k+1} &= x_{k} - \eta \frac{r_{k+1}}{F_k}\nabla f_k, \\
        r_{k+1} &= \frac{r_k}{1+\eta\frac{F'_k}{F_k}\|\nabla f_k\|^2}.
    \end{align}
\label{updating rule}
\end{subequations}
In this formulation, we consider the composite objective function defined as  $F(x)=\hat F(f(x)+c)$, where the function $\hat F: \mathbb{R}^+ \to \mathbb{R}^+$ generalizes the transformation used in AEGD. In the original AEGD method, a specific choice of $\hat F(s)=\sqrt{s}$ is used. Our goal is to extend this framework by allowing more general choices of $\hat F$, while maintaining the desirable convergence properties. Before discussing the structural conditions on 
$\hat F$, we clarify the notations used in (\ref{updating rule}): 
\begin{align}\label{ff} 
F_k: =\hat F(f(x_k)+c),\quad 
F_k':=\hat F'(f(x_k)+c).
\end{align} 
Given the function $\hat F$ and the notation introduced above, the update scheme in (\ref{updating rule}) gives a well-posed formulation. 

\subsection{Admissible energy functions} \label{admissible energy functions}
If $\hat F$ is strictly increasing,   energy stability is guaranteed regardless of the step size $\eta$.
This is formalized in the following result. 
\begin{theorem}[Energy stability]
Consider solving problem (\ref{target}). If $\hat F: \mathbb{R}^+ \to \mathbb{R}^+$ is strictly increasing, then the update rule (\ref{updating rule}) with (\ref{ff}) are unconditionally energy stable. Specifically, for any step size $\eta > 0$, 
the following identity holds: 
\begin{align}
    r_{k+1}^2 = r_k^2 - (r_{k+1} - r_k)^2 - \frac{2}{\eta}F_k F'_k \|x_{k+1} - x_k\|^2. 
\label{r_k^2 equality}
\end{align}
In particular, the sequence $\{r_k\}$ is strictly decreasing and converges to a non-negative limit $r^*$, i.e., $r_k \rightarrow r^*$,  as $k \rightarrow \infty$.
\label{unconditional energy stability}
\end{theorem}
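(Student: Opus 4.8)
The plan is to derive the quadratic identity \eqref{r_k^2 equality} directly from the two update relations and then read off monotonicity and convergence from it. The algebraic backbone is the elementary identity
$$2a(a-b) = a^2 - b^2 + (a-b)^2,$$
applied with $a = r_{k+1}$ and $b = r_k$, which rearranges to
$$r_{k+1}^2 = r_k^2 - (r_{k+1}-r_k)^2 + 2r_{k+1}(r_{k+1}-r_k).$$
It therefore suffices to show that the cross term $2r_{k+1}(r_{k+1}-r_k)$ equals $-\frac{2}{\eta}F_k F'_k\|x_{k+1}-x_k\|^2$.

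First I would rewrite the energy update in additive form. Clearing the denominator in the $r$-update gives $r_{k+1}-r_k = -\eta r_{k+1}\frac{F'_k}{F_k}\|\nabla f_k\|^2$, so that $2r_{k+1}(r_{k+1}-r_k) = -2\eta r_{k+1}^2 \frac{F'_k}{F_k}\|\nabla f_k\|^2$. Next I would use the position update $x_{k+1}-x_k = -\eta\frac{r_{k+1}}{F_k}\nabla f_k$ to obtain $\|x_{k+1}-x_k\|^2 = \eta^2\frac{r_{k+1}^2}{F_k^2}\|\nabla f_k\|^2$. Substituting this into $\frac{2}{\eta}F_k F'_k\|x_{k+1}-x_k\|^2$ yields exactly $2\eta r_{k+1}^2\frac{F'_k}{F_k}\|\nabla f_k\|^2$, matching the cross term up to sign. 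Plugging back into the rearranged identity establishes \eqref{r_k^2 equality}. This part is a routine, self-contained computation.

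For the qualitative conclusions, the point where the hypothesis on $\hat F$ enters is sign control of the coefficients. Since $f^* + c > 0$ guarantees $f(x_k)+c > 0$, the arguments lie in the domain $\mathbb{R}^+$ of $\hat F$; because $\hat F$ is $\mathbb{R}^+$-valued and strictly increasing, both $F_k = \hat F(f(x_k)+c) > 0$ and $F'_k = \hat F'(f(x_k)+c) > 0$. Consequently the denominator $1 + \eta\frac{F'_k}{F_k}\|\nabla f_k\|^2 \geq 1$, so starting from $r_0 > 0$ an easy induction gives $r_k > 0$ for all $k$ and $r_{k+1}\leq r_k$, with strict inequality whenever $\nabla f_k \neq 0$. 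Equivalently, reading \eqref{r_k^2 equality} directly, both subtracted terms on the right are nonnegative, so $\{r_k^2\}$ is non-increasing. Thus $\{r_k\}$ is monotone and bounded below by $0$, and therefore converges to some limit $r^* \geq 0$.

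I do not anticipate a genuine obstacle: the identity is forced by the update rules, and the only place care is needed is verifying positivity of $F_k$ and $F'_k$, which is precisely what the strictly increasing, $\mathbb{R}^+$-valued assumption together with $f^*+c>0$ supplies. The one caveat concerns the word \emph{strictly}: the energy is non-increasing in general and strictly decreasing exactly on iterations where $\nabla f_k \neq 0$, a distinction I would state explicitly.
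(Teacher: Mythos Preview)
Your proposal is correct and follows essentially the same route as the paper: compute $2r_{k+1}(r_{k+1}-r_k)$ from the update rules, convert $\|\nabla f_k\|^2$ to $\|x_{k+1}-x_k\|^2$ via the $x$-update, and then apply the identity $2a(a-b)=a^2-b^2+(a-b)^2$ to obtain \eqref{r_k^2 equality}. Your explicit discussion of why $F_k,F'_k>0$ and your caveat that strict decrease requires $\nabla f_k\neq 0$ are welcome clarifications that the paper's proof leaves implicit.
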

\begin{proof}
    From the update equations in (\ref{updating rule}), we have 
    \begin{align}
        2r_{k+1}(r_{k+1}-r_k) &= -2\eta \frac{F_k^{'}}{F_k} r_{k+1}^2 \|\nabla f_k\|^2\nonumber \\
        &= -\frac{2}{\eta} F_k F_k^{'} \|x_{k+1}-x_k\|^2.
    \label{x_k_equality}
    \end{align}
    Then, the equation (\ref{r_k^2 equality}) can be obtained by rewriting the left hand side with $2a(a - b) = a^2 - b^2 + (a - b)^2$, {which indicates that $r_k^2$ is decreasing and bounded below, therefore convergent.} So does $r_k$ since $r_k \geq 0$.
\end{proof}
However, if $r_k$ decays too rapidly, the solution sequence may diverge. Therefore, to ensure convergence of the algorithm, we must further restrict the choice of $\hat{F}$.  In particular, it is  necessary to control the gap between $F_k$ and $r_k$ across  iterations. Define the energy gap as  
\begin{align*}
    e_k=F_k-r_k.
\end{align*}
As we will show next, a useful estimate for this gap can be obtained  under additional structural conditions on $\hat F$, particularly when $F$ is  concave. 

\vspace*{1\baselineskip}
\begin{theorem}[Gap between $r_k$ and $F_k$]
    Suppose $f \in C^{1,1}_{L}(\mathbb{R}^d)$ and is bounded  below by $f^*$. Let $\hat{F}: \mathbb{R}^+ \to \mathbb{R}^+$ be an increasing and concave function. Let $\{r_k\}$ and $\{F_k\}$ be the sequences generated by (\ref{updating rule}) with (\ref{ff}). Then, for any $k > m \geq 0$, the following estimate holds: 
    \begin{align}
        e_k:=F_k-r_k \leq e_m + \frac{L\eta}{4F^*}(r_m^2 - r_k^2),
    \label{distance between r and F}
    \end{align}
    where $F^* = \hat{F}(f^* + c)$. 
\label{theorem_distance between r and F}
\end{theorem}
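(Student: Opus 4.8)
The plan is to control the one-step increment of the energy gap $e_{k+1}-e_k$ and then telescope from $m$ to $k-1$. I would begin by splitting $e_{k+1}-e_k = (F_{k+1}-F_k) - (r_{k+1}-r_k)$ and bounding each difference using the two structural ingredients available: concavity of $\hat F$ and the $L$-smoothness of $f$. The governing intuition is that the objective decreases along the update, that the decrease can be tied to the energy decrement, and that the only leftover error term is a second-order quantity controlled by the stability identity (\ref{r_k^2 equality}).

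First I would exploit concavity: since the tangent line of a concave function lies above its graph, $\hat F(b)\le \hat F(a)+\hat F'(a)(b-a)$. Taking $a=f_k+c$ and $b=f_{k+1}+c$ (both positive, as $f_k\ge f^*$ and $f^*+c>0$) gives $F_{k+1}-F_k \le F_k'(f_{k+1}-f_k)$. Next I would invoke the $C^{1,1}_L$ descent inequality $f_{k+1}\le f_k + \nabla f_k^T(x_{k+1}-x_k) + \tfrac{L}{2}\|x_{k+1}-x_k\|^2$ and substitute the position update $x_{k+1}-x_k = -\eta\tfrac{r_{k+1}}{F_k}\nabla f_k$, so that the inner product becomes $-\eta\tfrac{r_{k+1}}{F_k}\|\nabla f_k\|^2$.

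The crux of the argument — and the step I expect to be the only nonroutine one — is recognizing that after multiplying the descent bound by $F_k'>0$, the first-order term is \emph{exactly} the energy decrement. Indeed, from the energy update $r_{k+1}-r_k=-\eta r_{k+1}\tfrac{F_k'}{F_k}\|\nabla f_k\|^2$, the leading term $-\eta F_k'\tfrac{r_{k+1}}{F_k}\|\nabla f_k\|^2$ equals $r_{k+1}-r_k$. Hence $F_{k+1}-F_k \le (r_{k+1}-r_k) + \tfrac{L}{2}F_k'\|x_{k+1}-x_k\|^2$, the $r_{k+1}-r_k$ terms cancel in $e_{k+1}-e_k$, and one is left with the clean per-step bound $e_{k+1}-e_k \le \tfrac{L}{2}F_k'\|x_{k+1}-x_k\|^2$. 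Everything before and after this cancellation is bookkeeping that depends only on the sign conditions $F_k'>0$ (increasing $\hat F$) and $r_k^2-r_{k+1}^2\ge 0$ (energy stability).

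It then remains to turn $F_k'\|x_{k+1}-x_k\|^2$ into a telescoping quantity. Using identity (\ref{r_k^2 equality}) I would drop the nonnegative $(r_{k+1}-r_k)^2$ term to get $r_k^2-r_{k+1}^2 \ge \tfrac{2}{\eta}F_kF_k'\|x_{k+1}-x_k\|^2$, hence $F_k'\|x_{k+1}-x_k\|^2 \le \tfrac{\eta}{2F_k}(r_k^2-r_{k+1}^2)$. Finally, monotonicity of $\hat F$ and $f_k\ge f^*$ give the uniform lower bound $F_k\ge F^*=\hat F(f^*+c)$, yielding $e_{k+1}-e_k \le \tfrac{L\eta}{4F^*}(r_k^2-r_{k+1}^2)$. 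Summing over $j=m,\dots,k-1$ collapses the right side to $\tfrac{L\eta}{4F^*}(r_m^2-r_k^2)$ and produces the claimed estimate $e_k\le e_m + \tfrac{L\eta}{4F^*}(r_m^2-r_k^2)$.
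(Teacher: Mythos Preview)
Your proof is correct and follows essentially the same route as the paper: both combine the $L$-smoothness descent inequality, the concavity bound $F_{k+1}-F_k\le F_k'(f_{k+1}-f_k)$, the identification of the first-order term with $r_{k+1}-r_k$, and the algebraic identity behind (\ref{r_k^2 equality}) to obtain the per-step bound $e_{k+1}-e_k\le \tfrac{L\eta}{4F_k}(r_k^2-r_{k+1}^2)$, then telescope. The only cosmetic difference is that you keep the remainder as $\tfrac{L}{2}F_k'\|x_{k+1}-x_k\|^2$ and invoke (\ref{r_k^2 equality}) directly, while the paper expands everything in terms of $\|\nabla f_k\|^2$ and rederives that identity inline via $2a(a-b)=a^2-b^2+(a-b)^2$.
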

This inequality quantifies how the gap $e_k$ evolves over iterations and shows that, under suitable assumptions, the gap remains controlled.

\begin{proof}
    Consider the update rule (\ref{updating rule}), since $f \in C^{1,1}_{L}(\mathbb{R}^d)$, we can apply the standard smoothness inequality. For all $j \geq 0$,
    \begin{align*}
        f_{j+1} 
        &\leq f_{j} + \nabla f_{j}^T(x_{j+1} - x_{j}) + \frac{L}{2}\|x_{j+1} - x_{j}\|^2.
    \end{align*}
    Using the updated expression $x_{j+1}- x_j=-\eta r_{j+1}/F_j \nabla f_j$, this becomes 
    \begin{align*} 
    f_{j+1} \leq  f_{j} - \eta \frac{r_{j+1}}{F_j} \| \nabla f_j \|^2 + \frac{L\eta^2 r_{j+1}^2}{2F_j^2} \| \nabla f_j \|^2.
    \end{align*}
    Next, recall that 
    $$
    r_{j+1}-r_j=-\eta \frac{F_j'}{F_j}
    r_{j+1}\|\nabla f_j\|^2.
    $$
    Rewriting the above inequality using this identity, we obtain:
    \begin{align*} 
        f_{j+1} \leq  f_j + \frac{1}{F'_j}(r_{j+1} - r_j) - \frac{L\eta}{2F_jF'_j}r_{j+1}(r_{j+1} - r_j).
        \end{align*} 
    Now applying the identity    $2a(a - b) = a^2 - b^2 + (a - b)^2$ to rewrite the last term: 
    $$
    r_{j+1}(r_{j+1}-r_j)=\frac{1}{2}
    (r_{j+1}^2-r_{j}^2 + (r_{j+1}-r_j)^2),
    $$    
    so the inequality becomes:
 \begin{align*}
     f_{j+1} & \leq  f_j + \frac{1}{F'_j}(r_{j+1} - r_j) + \frac{L\eta}{4F_jF'_j}\big[ r_{j}^2 - r_{j+1}^2 - (r_{j+1} - r_{j})^2 \big]\\
        &\leq f_j + \frac{1}{F'_j}(r_{j+1} - r_j) + \frac{L\eta}{4F_jF'_j}(r_{j}^2 - r_{j+1}^2).
    \end{align*}
   Since $\hat{F}$ is concave, we have  
   $$
   F_{j+1} - F_{j} \leq F'_j(f_{j+1} - f_j),
   $$
   which implies 
    $$ 
    e_{j+1}- e_j =F_{j+1} -F_j -(r_{j+1}-r_j) \leq F_j'(f_{j+1}-f_j)- (r_{j+1}-r_j) \leq 
    \frac{L\eta}{4F_j} (r_j^2 - r_{j+1}^2).
    $$
    Since $F_j \geq F^*:=\hat F (f^*+c)$, we can bound this as 
    $$
    e_{j+1}\leq e_j + \frac{L\eta}{4F^*} (r_j^2 - r_{j+1}^2).
    $$
    Summing both sides over $j=m$ to $k-1$, we obtain (\ref{distance between r and F}). 
\end{proof}
To further analyze the generalized  AEGD method we now state structural  assumptions on $\hat F$. 
\begin{assumption} 
    The function $\hat{F}: \mathbb{R}^+ \rightarrow \mathbb{R}^+$ explored in the update rule (\ref{updating rule}) is assumed to be smooth, strictly increasing such that $ \hat F' > 0$ and concave.
\label{assumptions on F}
\end{assumption}
These conditions are sufficient to control the disparity between $r_k$ and $F_k$, consequently leading to further nice properties of the update rule; see Section \ref{Convergence theory}. 

There exists a broad class of functions satisfying Assumption \ref{assumptions on F}. Through our numerical experiments, we observed that using $\hat{F}(s) = log(s + 1)$ often results in better empirical  performance compared to the classical choice $\hat{F}(s) = \sqrt{s}$ in AEGD. 

\section{Convergence theory: non-convex optimizaiton}\label{Convergence theory}
In this section, we present key  theoretical results concerning the update rule (\ref{updating rule}). 
We begin by deriving several upper and lower bounds that will be used in the subsequent analysis.

\begin{lemma}\label{important bounds}
    Suppose $f \in C^{1,1}_{L}(\mathbb{R}^d)$ and is bounded  below by $f^*$. Let the sequence $\{x_k\}_{k=0}^{\infty}$ be   generated by the update rule (\ref{updating rule}), where  $\hat F$ satisfies Assumption \ref{assumptions on F}. Then for any $k > 0$, the following bounds hold: 

    \quad(1) $F^* \leq F_k \leq \Bar{F}$, where $F^* = \hat F(f^*+c)$ and $\Bar{F}: = F_0 + \frac{L\eta r_0^2}{4F^*}$, 
     
    \quad(2) $f^* \leq f_k \leq \Bar{f}:= \hat F^{-1}(F_0 + \frac{L\eta r_0^2}{4F^*}) - c$,

    \quad(3) $\Bar{F'} \leq F'_k \leq F'^*$, where $\Bar{F'} = \hat F'(\Bar{f} + c)$ and $F'^* = \hat F'(f^* + c)$.
\end{lemma}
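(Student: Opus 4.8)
The plan is to derive all three pairs of bounds by combining the energy-stability statement of Theorem~\ref{unconditional energy stability} with the gap estimate of Theorem~\ref{theorem_distance between r and F}, using only the structural hypotheses on $\hat F$ collected in Assumption~\ref{assumptions on F}. The lower bounds in (1)--(3) come essentially for free from the monotonicity and concavity of $\hat F$ together with the definition of $f^*$, so the real substance lies in the single upper bound $F_k \le \Bar{F}$ in part (1); once that is in hand, parts (2) and (3) follow by inverting and differentiating along the monotone/concave structure of $\hat F$.

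For part (1), the lower bound is immediate: since $f^* = \inf_x f(x)$ we have $f_k \ge f^*$, and as $\hat F$ is strictly increasing, $F_k = \hat F(f_k + c) \ge \hat F(f^* + c) = F^*$. For the upper bound I would invoke Theorem~\ref{theorem_distance between r and F} with $m = 0$, giving $F_k - r_k \le (F_0 - r_0) + \frac{L\eta}{4F^*}(r_0^2 - r_k^2)$. Adding $r_k$ to both sides yields $F_k \le F_0 + (r_k - r_0) + \frac{L\eta}{4F^*}(r_0^2 - r_k^2)$. At this point I use Theorem~\ref{unconditional energy stability}: the sequence $\{r_k\}$ is strictly decreasing and converges to a nonnegative limit $r^*$, so $0 \le r_k \le r_0$ for every $k$. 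Hence $r_k - r_0 \le 0$ and $r_0^2 - r_k^2 \le r_0^2$; discarding the nonpositive term and overestimating the last one gives $F_k \le F_0 + \frac{L\eta r_0^2}{4F^*} = \Bar{F}$.

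Parts (2) and (3) are then corollaries of (1). For (2) the lower bound $f_k \ge f^*$ is again the definition of $f^*$, and for the upper bound I would apply $\hat F^{-1}$ -- which exists and is increasing because $\hat F$ is strictly increasing -- to $F_k = \hat F(f_k + c) \le \Bar{F}$, obtaining $f_k + c \le \hat F^{-1}(\Bar{F})$ and hence $f_k \le \hat F^{-1}\!\big(F_0 + \frac{L\eta r_0^2}{4F^*}\big) - c = \Bar{f}$. For (3), concavity of $\hat F$ makes $\hat F'$ non-increasing; applying it to the chain $f^* + c \le f_k + c \le \Bar{f} + c$ coming from (2) reverses the order, so $\hat F'(\Bar{f} + c) \le \hat F'(f_k + c) \le \hat F'(f^* + c)$, that is $\Bar{F'} \le F'_k \le F'^*$.

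The only step demanding care is the upper bound in (1): it is valid precisely because the monotone decay of $r_k$ from Theorem~\ref{unconditional energy stability} lets both $r_k - r_0$ and $r_0^2 - r_k^2$ be controlled without any extra assumption relating $r_0$ and $F_0$. I expect this to be the main -- though still mild -- obstacle, since everything else reduces to the monotonicity of $\hat F$ and of $\hat F'$.
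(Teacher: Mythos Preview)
Your proposal is correct and essentially mirrors the paper's own proof: apply Theorem~\ref{theorem_distance between r and F} with $m=0$, use the monotone decay $0\le r_k\le r_0$ from Theorem~\ref{unconditional energy stability} to drop the $r_k-r_0$ term and bound $r_0^2-r_k^2\le r_0^2$, giving $F_k\le \Bar{F}$, and then derive (2) and (3) from the monotonicity of $\hat F$ and of $\hat F'$. The paper only phrases the algebra slightly differently (writing $F_k=e_k+r_k$ rather than adding $r_k$ to both sides), but the argument is the same.
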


\begin{proof}
(1) By Theorem \ref{theorem_distance between r and F}, setting $m=0$ yields the following bound:
\begin{align}
    e_k \leq e_0 + \frac{L\eta}{4F^*}(r_0^2 - r_k^2), \ \forall k > 0, 
\label{distance estimation}
\end{align}
which leads to the upper bound for $F_k$ as follows:  
\begin{align}
    F_k =e_k+r_k \leq e_0  + \frac{L\eta r_0^2}{4F^*}+r_0 =\bar F
\label{upper bound of F_k}
\end{align}
valid for all $k>0$. The lower bound follows directly from the definition of $F_k$: 
$$
F_k=\hat F(f(x_k)+c)\geq \hat F(f^*+c)=F^*. 
$$
(2) and (3) follow from the monotonicity 
and concavity of $\hat F$, which imply that $f_k$ and $F_k'$ are bounded accordingly due to the inverse and derivative behavior of $\hat F$.  
\end{proof}
We now proceed to examine the convergence properties of the update rule ($\ref{updating rule}$). It can be equivalently rewritten as:
\begin{align}
    x_{k+1} = x_{k} - \eta_k\nabla f_k, \quad   \eta_k = \eta \frac{r_{k+1}}{F_k},
\label{rewrite of update rule}
\end{align}
and the energy parameter update is given by 
$$
r_{k+1} = \frac{r_k}{1+\eta\frac{F'_k}{F_k}\|\nabla f_k\|^2}.
$$
If $f \in C^{1,1}_{L}(\mathbb{R}^d)$, then for all $k\geq 0$, we have: 
\begin{align}
    f_{k+1} 
    &\leq f_k + \nabla f_k^T(x_{k+1} - x_{k}) + \frac{L}{2}\|x_{k+1} - x_{k}\|^2 \nonumber \\ 
    &= f_k - \eta_k\|\nabla f_k\|^2 + \frac{L\eta_k^2}{2}\|\nabla f_k\|^2 
    = f_k - \eta_k(1 - \frac{L\eta_k}{2})\|\nabla f_k\|^2.
\label{reformulated upper bound of f}
\end{align}
 Summing both sides from $k=0$ to $\infty$, we obtain 
\begin{align}\label{bounded infinity sum}
    \sum_{k=0}^{\infty}\eta_k(1 - \frac{L\eta_k}{2})\|\nabla f_k\|^2 \leq f_0 - f^*.  
\end{align}
This inequality implies that  $\|\nabla f_k\|^2 \to 0 \ \text{as} \ k\to \infty$, provided there exists an integer $N\in \mathbb{N}$ and constants $a,b>0$ such that 
$$ 
a \leq \eta_k \leq b < \frac{2}{L}, \ \forall \; k\geq N. 
$$
To show the upper bound on $\eta_k$ is not restrictive, we first establish the following lemma. 
\vspace{1\baselineskip}
\begin{lemma}[Two-stage behavior]
    Consider the update rule (\ref{updating rule}). For any $\epsilon > 0$ and $C>0$, define  
    $$
    N_0 = \left(\frac{ln(\frac{r_0}{C})}{ln(1+\underline{\alpha} \eta \epsilon)}\right)_+ := \max 
    \left\{
    \frac{ln(\frac{r_0}{C})}{ln(1+\underline{\alpha} \eta \epsilon)}, 1 \right\}, \; \text{where}\; \underline{\alpha}:= \frac{\Bar{F'}}{\Bar{F}}. 
    $$
    Then if $N \geq N_0$, at least one of the following holds: 
    $$
    \min\limits_{k\leq N}\| \nabla f_k \|^2 < \epsilon \quad  \text{or} \quad   r_{N} \leq C.
    $$
\label{two-stage}
\end{lemma}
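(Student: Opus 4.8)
The plan is to prove the contrapositive of the stated dichotomy: I assume that the gradient alternative fails, namely
\[
\min_{k\leq N}\|\nabla f_k\|^2 \geq \epsilon,
\]
so that $\|\nabla f_k\|^2 \geq \epsilon$ for every $k=0,1,\dots,N$, and then show this forces $r_N \leq C$. Since at least one of the two alternatives must hold, establishing this implication suffices.

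First I would exploit the multiplicative structure of the energy update in (\ref{updating rule}), writing
\[
\frac{r_{k+1}}{r_k} = \frac{1}{1+\eta\frac{F'_k}{F_k}\|\nabla f_k\|^2},
\]
so that the goal reduces to extracting a uniform per-step contraction factor. The key ingredient is the lower bound on the ratio $F'_k/F_k$ coming from Lemma \ref{important bounds}: combining $F_k \leq \bar{F}$ with $F'_k \geq \bar{F'}$ gives $F'_k/F_k \geq \bar{F'}/\bar{F} = \underline{\alpha}$. Together with the standing assumption $\|\nabla f_k\|^2 \geq \epsilon$, this yields the denominator bound $1+\eta\frac{F'_k}{F_k}\|\nabla f_k\|^2 \geq 1+\underline{\alpha}\eta\epsilon$, and hence the one-step estimate $r_{k+1} \leq r_k/(1+\underline{\alpha}\eta\epsilon)$ valid for each such $k$.

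Next I would telescope this estimate over the transitions $k=0,1,\dots,N-1$ (all of which are controlled, since $\|\nabla f_k\|^2 \geq \epsilon$ holds throughout) to obtain the geometric bound
\[
r_N \leq \frac{r_0}{(1+\underline{\alpha}\eta\epsilon)^N}.
\]
It then remains to verify that the threshold $N \geq N_0$ makes the right-hand side at most $C$. When $r_0 > C$, the quantity $\ln(r_0/C)$ is positive, and after taking logarithms the inequality $r_0/(1+\underline{\alpha}\eta\epsilon)^N \leq C$ is equivalent to $N \geq \ln(r_0/C)/\ln(1+\underline{\alpha}\eta\epsilon)$, which is precisely what $N \geq N_0$ guarantees. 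When $r_0 \leq C$, the claim is immediate: by the monotone decay of $\{r_k\}$ established in Theorem \ref{unconditional energy stability}, $r_N \leq r_0 \leq C$, and the $\max\{\cdot,1\}$ in the definition of $N_0$ merely ensures the statement is nonvacuous.

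The argument is short, and the only place requiring care is the transfer of the two-sided bounds on $F_k$ and $F'_k$ into the single lower bound $F'_k/F_k \geq \underline{\alpha}$; this relies crucially on the uniform bounds of Lemma \ref{important bounds}, which themselves depend on Assumption \ref{assumptions on F} (monotonicity and concavity of $\hat{F}$). I do not anticipate any genuine obstacle beyond keeping the index ranges consistent when telescoping, so that the contraction at each step is legitimately applied.
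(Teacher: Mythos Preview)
Your proposal is correct and follows essentially the same approach as the paper: both use the uniform lower bound $F'_k/F_k \geq \underline{\alpha}$ from Lemma \ref{important bounds} to obtain the geometric decay $r_N \leq r_0/(1+\underline{\alpha}\eta\epsilon)^N$, then compare with $C$ via the threshold $N_0$. The only cosmetic difference is that the paper first argues the implication $r_N > C \Rightarrow \min_k\|\nabla f_k\|^2 < \epsilon$ and then its converse, whereas you go straight to the contrapositive and handle the trivial case $r_0 \leq C$ explicitly.
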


\begin{proof}
By (1) and (3) in Lemma \ref{important bounds}, we have  
$$
\alpha_k = \frac{F'_k}{F_k}\geq \underline{\alpha}. 
$$
For any $N > 0$, we consider
$$
r_N = \frac{r_0}{\prod\limits_{k=0}^{N-1}(1 + \alpha_k \eta \| \nabla f_k \|^2)}.
$$
Using the lower bound $\alpha_k \geq \underline{\alpha}$, we get 
    \begin{align}
        r_N  \leq \frac{r_0}{(1 + \underline{\alpha}\eta \min\limits_{k\leq N-1}\| \nabla f_k \|^2)^N}.
    \label{r_N inequality}
    \end{align}
    Now fix any constant $C > 0$. Suppose  $r_N > C$ for some $N$, then by the above inequality,
    \begin{align*}
        \min\limits_{k \leq N-1} \| \nabla f_k \|^2 \leq \frac{(\frac{r_0}{r_N})^{\frac{1}{N}} - 1}{\underline{\alpha}\eta} < \frac{(\frac{r_0}{C})^{\frac{1}{N}} - 1}{\underline{\alpha}\eta}.
    \end{align*}
    Therefore, if 
    $$
    N \geq \frac{ln(\frac{r_0}{C})}{ln(1 + \underline{\alpha}\eta \epsilon)},
    $$
    then 
   $$
   \min\limits_{k \leq N-1} \| \nabla f_k \|^2 < \epsilon.
   $$
   Conversely, if $\min\limits_{k \leq N-1} \| \nabla f_k \|^2 \geq \epsilon$, then (\ref{r_N inequality}) implies 
   $$
   r_N \leq \frac{r_0}{(1 + \underline \alpha \eta \epsilon)^N} \leq C,
   $$
   provided $N \geq \frac{ln(\frac{r_0}{C})}{ln(1 + \underline \eta \epsilon)}$.
\end{proof}
It is worth emphasizing that the result provided in Lemma \ref{two-stage} holds unconditionally. According to this result,  for any $\epsilon > 0$,  after 
$$
N_0 = \left(\frac{ln(\frac{r_0}{C})}{ln(1+\underline{\alpha} \eta \epsilon)}\right)_+ := \max \left\{\frac{ln(\frac{r_0}{C})}{ln(1+\underline{\alpha} \eta \epsilon)}, 1\right\} \quad \text{with}\quad C = \frac{F^*}{L\eta}, 
$$ 
iterations,  we either reach an iterate $x_k$ such that $\| \nabla f_k \|^2 < \epsilon$, or 
$$
r_{N_0} \leq \frac{F^*}{L\eta},
$$
which implies that for all $k\geq N_0$, the effective step size satisfies  
$$
\eta_k = \eta\frac{r_{k+1}}{F_k} \leq \eta\frac{r_{N_0}}{F^*} \leq  \frac{1}{L}=: b.
$$
On the other hand, for all $k\geq 0$,  we also have the lower bound  
$$
\eta_k = \eta\frac{r_{k+1}}{F_k} \geq \eta\frac{r^*}{\Bar{F}},
$$
where $r^*:=inf_k r_k$ and $\Bar F:=\sup_k F_k$. 
Therefore, to ensure a strictly positive lower bound $a > 0$ on the step size, it is necessary  to establish conditions under which $r^*>0$. 

In the following, we derive sufficient conditions on the parameters, $\eta$, $r_0$ and $c$ to ensure $r^* > 0$. 

\begin{lemma}\label{conditions for r^*>0}
    Suppose $f \in C^{1,1}_{L}(\mathbb{R}^d)$ and is bounded below by $f^*$. Let $\{r_k\}_{k=0}^{\infty}$ be the sequence generated by (\ref{updating rule}), with $\hat{F}$ satisfying Assumption \ref{assumptions on F}. If $r_0 > F_0-F^*$, then 
    $$
    r_k \geq r^* >0 \quad \forall \; k\geq 0,
    $$
    provided $$
    \eta < \eta_{r_0} := \frac{4F^*(r_0-F_0+F^*)}{Lr_0^2}.
    $$
\end{lemma}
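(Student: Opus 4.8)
The plan is to obtain a \emph{uniform} lower bound on $r_k$ that is independent of $k$ and strictly positive under the stated condition on $\eta$; since $\{r_k\}$ is decreasing with limit $r^*\geq 0$ by Theorem \ref{unconditional energy stability}, such a bound immediately forces $r^* = \inf_k r_k > 0$.

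The starting point is the gap estimate from Theorem \ref{theorem_distance between r and F} with $m=0$, namely (\ref{distance estimation}):
$$
e_k = F_k - r_k \leq e_0 + \frac{L\eta}{4F^*}(r_0^2 - r_k^2), \quad \forall k > 0.
$$
First I would rewrite $r_k = F_k - e_k$ and use the lower bound $F_k \geq F^*$, which follows directly from monotonicity of $\hat F$ as in part (1) of Lemma \ref{important bounds}, to get
$$
r_k \geq F^* - e_k \geq F^* - e_0 - \frac{L\eta}{4F^*}(r_0^2 - r_k^2).
$$

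The key simplification—and the only slightly delicate point—is to \emph{decouple} the recursive appearance of $r_k$ on both sides. Rather than solving the resulting quadratic inequality in $r_k$, I would simply discard the favorable term by noting $r_k \geq 0$ (Theorem \ref{unconditional energy stability}), so that $r_0^2 - r_k^2 \leq r_0^2$. This yields the $k$-independent bound
$$
r_k \geq F^* - e_0 - \frac{L\eta r_0^2}{4F^*}.
$$
Substituting $e_0 = F_0 - r_0$ gives
$$
r_k \geq r_0 - F_0 + F^* - \frac{L\eta r_0^2}{4F^*}, \quad \forall k \geq 0.
$$

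Finally I would analyze positivity of the right-hand side. The hypothesis $r_0 > F_0 - F^*$ guarantees $r_0 - F_0 + F^* > 0$, so the bound is positive precisely when $\frac{L\eta r_0^2}{4F^*} < r_0 - F_0 + F^*$, i.e. when $\eta < \eta_{r_0} = \frac{4F^*(r_0 - F_0 + F^*)}{Lr_0^2}$; note that the hypothesis is exactly what makes this threshold strictly positive, so the admissible range of $\eta$ is nonempty. Under this condition the lower bound is a strictly positive constant independent of $k$, and hence $r^* = \inf_k r_k$ inherits the same strictly positive lower bound, completing the proof. I do not anticipate a serious obstacle here: the entire argument rests on the gap estimate already established in Theorem \ref{theorem_distance between r and F}, and the crux is recognizing that bounding $r_0^2 - r_k^2 \leq r_0^2$ is both sufficient and exactly what produces the advertised threshold $\eta_{r_0}$.
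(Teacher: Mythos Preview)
Your proposal is correct and follows essentially the same approach as the paper: both start from the gap estimate \eqref{distance between r and F} with $m=0$, write $r_k = F_k - e_k$, use $F_k \geq F^*$, and then drop the favorable $r_k^2$ term to obtain the uniform lower bound $r_k \geq r_0 - F_0 + F^* - \frac{L\eta r_0^2}{4F^*}$. The paper additionally rewrites this bound as $\frac{Lr_0^2}{4F^*}(\eta_{r_0} - \eta)$, but that is purely cosmetic.
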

\begin{proof}
    The inequality (\ref{distance between r and F}) with $m = 0$ gives that for all $k > 0$,
    \begin{align}
        r_k \nonumber
        & =F_k-e_k\\ \nonumber
        & \geq F_k - \big( e_0  + \frac{L\eta }{4F^*} (r_0^2-r_k^2)\big)\\ \label{r^*}
        & \geq F^* + r_0 - F_0  - \frac{L\eta r_0^2}{4F^*} \\ \nonumber
        & = \frac{Lr_0^2}{4F^*}(\eta_{r_0} - \eta),
    \end{align}
    which implies  $r_k \geq r^*>0$ whenever  $r_0 > F_0 - F^*$ and $\eta < \eta_{r_0}$. 
\end{proof}

\begin{remark}
Let us consider the choice
$r_0=F_0$, which is the default setting for consistency with the derivation of the method. In this case,  the expression for $\eta_{r_0}$ simplifies to  
$$
\eta_{r_0}= \frac{4}{L}\left(\frac{F^*}{F_0}\right)^2 \sim \frac{4}{L} 
$$
when {$F^*\sim F_0$}, which holds for sufficiently large $c$. This asymptotic value clearly exceeds the classical upper threshold $2/L$, which is known to be necessary for ensuring the stability of the standard gradient descent update. 
\label{eta_r0}
\end{remark}
Next, we examine the role of the parameter $c$, as it appears in equation (\ref{ff}).  Given  any $r_0 > 0$ and $\eta > 0$, can we choose $c$ in such a way that the  conditions $$
r_0 > F_0 - F^* \; \text{and}\;  \eta < \eta_{r_0}
$$
are always satisfied? If so, this would  relax the conditions on both $r_0$ and $\eta$,
demonstrating the method's robustness. 

The following lemma establishes a lower bound on $c$ that ensure these conditions are met, thereby enhancing the algorithm's stability 
across a wide range of initializations and step sizes. 

\begin{lemma}
    Suppose $f \in C^{1,1}_{L}(\mathbb{R}^d)$ and is bounded  below by $f^*$. Let $\hat{F}: \mathbb{R}^+ \to \mathbb{R}^+$  be a smooth, strictly increasing, and strictly concave fucntion satisfying   
    $$
\lim\limits_{s\to \infty}\hat{F}(s) = \infty, \; \text{and} \;  \lim\limits_{s\to \infty}\hat{F}'(s) = 0.
$$
Let $\{r_k\}_{k=0}^{\infty}$ be the sequence generated by the update rule (\ref{updating rule}) with such a $\hat F$. Then for any fixed $\eta >0$ and $r_0 > 0$, if the parameter $c$ satisfies 
    \begin{align*}
        c \geq c^*, \quad c^*:=\min\limits_{a\in (0,1)} \max\left\{ \hat{F}'^{-1}\left(\frac{ar_0}{f_0 - f^*}\right),\ \hat{F}^{-1}\left(\frac{L\eta r_0}{4(1 - a)}\right) \right\} - f^*,
    \end{align*}
then $r_k \geq r^* > 0$ for all $k > 0$.
\label{condition on c for robust}
\end{lemma}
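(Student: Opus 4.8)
The plan is to deduce the result entirely from the previously established Lemma~\ref{conditions for r^*>0}, which already guarantees $r_k \geq r^* > 0$ once the two conditions $r_0 > F_0 - F^*$ and $\eta < \eta_{r_0} = \frac{4F^*(r_0 - F_0 + F^*)}{Lr_0^2}$ are met. Since $\eta$ and $r_0$ are fixed, the whole task becomes showing that a sufficiently large $c$ forces both conditions, and identifying the sharp threshold as the stated $c^*$. (We may assume $f_0 > f^*$; otherwise $\nabla f_0 = 0$, the iteration is stationary, and $r_k \equiv r_0 > 0$ trivially, so $c^*$ need not be finite.)

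The key device is a splitting parameter $a \in (0,1)$ used to decouple the two conditions. I would replace them by the two sufficient conditions (a) $F_0 - F^* \leq a r_0$ and (b) $F^* \geq \frac{L\eta r_0}{4(1-a)}$. Condition (a) implies $r_0 > F_0 - F^*$ at once, since $a < 1$. For the step-size condition, (a) gives $r_0 - F_0 + F^* \geq (1-a) r_0$, hence $\eta_{r_0} \geq \frac{4F^*(1-a)}{Lr_0}$, while (b) rearranges to $\frac{4F^*(1-a)}{Lr_0} \geq \eta$; chaining the two yields $\eta \leq \eta_{r_0}$. The one delicate point is that Lemma~\ref{conditions for r^*>0} asks for the strict inequality $\eta < \eta_{r_0}$; I would recover strictness by using $c > c^*$, or by noting that at $c = c^*$ the defining minimum is attained at an $a^*$ for which (a) is strict, which propagates to a strict bound on $\eta_{r_0}$.

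Next I would turn (a) and (b) into explicit lower bounds on $c$. By concavity and the mean value theorem, $F_0 - F^* = \hat F(f_0 + c) - \hat F(f^* + c) \leq \hat F'(f^* + c)(f_0 - f^*)$, so (a) holds as soon as $\hat F'(f^* + c) \leq \frac{a r_0}{f_0 - f^*}$. Strict concavity makes $\hat F'$ strictly decreasing, and $\lim_{s\to\infty}\hat F'(s) = 0$ places small positive values inside its range, so this is equivalent to $c \geq \hat{F}'^{-1}(\frac{a r_0}{f_0 - f^*}) - f^*$. Symmetrically, since $\hat F$ is increasing with $\lim_{s\to\infty}\hat F(s) = \infty$, condition (b), namely $\hat F(f^* + c) \geq \frac{L\eta r_0}{4(1-a)}$, is equivalent to $c \geq \hat{F}^{-1}(\frac{L\eta r_0}{4(1-a)}) - f^*$. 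Thus (a) and (b) both hold for a fixed $a$ precisely when $c$ exceeds the maximum of these two expressions, and minimizing over $a \in (0,1)$ recovers exactly the threshold $c^*$ of the statement.

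The main obstacle, and the place where the two asymptotic hypotheses on $\hat F$ are indispensable, is verifying that $c^*$ is actually finite, i.e.\ that the inner maximum is finite for at least one admissible $a$. As $a \to 0^+$ the first argument $\frac{a r_0}{f_0 - f^*} \to 0$ falls into the range $(0, \hat F'(0^+))$ of $\hat F'$, so the first term is finite for all small $a$; for that same $a < 1$ the second argument $\frac{L\eta r_0}{4(1-a)}$ is a fixed finite number whose $\hat F$-preimage exists because $\hat F(s) \to \infty$, so the second term is finite too. Hence the minimum over $a$ is a finite $c^*$, and for every $c \geq c^*$ Lemma~\ref{conditions for r^*>0} delivers $r_k \geq r^* > 0$. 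Apart from carefully tracking the domains of $\hat{F}'^{-1}$ and $\hat{F}^{-1}$, the argument is a clean two-parameter reduction (optimizing over the split $a$, then over $c$) to the earlier lemma.
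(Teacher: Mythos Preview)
Your proposal is correct and follows essentially the same route as the paper: reduce to Lemma~\ref{conditions for r^*>0}, introduce a splitting parameter $a\in(0,1)$, bound $F_0-F^*$ via the mean value theorem and concavity to obtain $c_1(a)=\hat F'^{-1}\!\big(\tfrac{ar_0}{f_0-f^*}\big)-f^*$, bound $F^*$ directly to obtain $c_2(a)=\hat F^{-1}\!\big(\tfrac{L\eta r_0}{4(1-a)}\big)-f^*$, and take $c^*=\min_a\max\{c_1(a),c_2(a)\}$. The only notable difference is in the finiteness argument for $c^*$: the paper shows $c_1$ is decreasing, $c_2$ is increasing, with $c_1(0^+)=c_2(1^-)=\infty$ and finite opposite endpoints, so the two curves cross and the minimum is attained there; your argument that a single admissible $a$ makes the inner $\max$ finite suffices for the infimum but not literally for attainment of the $\min$. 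On the strictness of $\eta<\eta_{r_0}$, the paper handles it exactly where you suspected: the MVT bound $F_0-F^*<\hat F'(f^*+c)(f_0-f^*)$ is strict by strict concavity, so your condition (a) is in fact strict, and the chain $\eta\le\tfrac{4F^*(1-a)}{Lr_0}<\eta_{r_0}$ goes through without needing $c>c^*$.
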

\begin{proof} 
    To ensure $r^*>0$ for any given $r_0 > 0$ and $\eta > 0$, we seek $c$ such that the conditions 
    $$
    r_0 > F_0 - F^* \; \text{and}\; \eta < \eta_{r_0},
    $$
    (from Lemma \ref{conditions for r^*>0}) hold. We achieve this in two steps.
    
   {\bf Step 1: Condition on $r_0 >F_0-F^*$}.\\
   Define  
    \begin{align*}
        G(c)= F_0 - F^*.  
    \end{align*}
    By the concavity of $\hat{F}$ and $f_0 > f^*$, we have 
    \begin{align*}
        G(c) & =  \hat{F}(f_0 + c) - \hat{F}(f^* + c) \\
        &  \leq \hat{F}'(f^* + c)(f_0 - f^*).
    \end{align*}
    Therefore, to ensure $r_0>G(c)$, it suffices to select  $c$ such that  
    \begin{align*}
        r_0  >  a r_0 \geq \hat{F}'(f^* + c)(f_0 - f^*)  \geq G(c)
    \end{align*}    
    for some $a\in (0, 1)$, where $a$ is to be determined later. Solving for $c$, we  require 
        \begin{align*}
            c \geq c_1(a):=  \hat{F}'^{-1}\left(\frac{ar_0}{f_0 - f^*}\right) - f^*.
        \end{align*}   
    {\bf Step 2: Condition on} $\eta <\eta_{r_0}$. \\   
    Recall that 
    $$
    \eta_{r_0}=\frac{4F^*(r_0-F_0+F^*)}{Lr_0^2}=\frac{4F^*(r_0-G(c))}{Lr_0^2}. 
    $$
    To ensure  $\eta < \eta_{r_0}$, it suffices to ensure   
        \begin{align*}
            \frac{1}{4} L \eta r_0^2 
            &<  \hat F(f^*+c)(r_0 - G(c)). 
        \end{align*}
    Note that under $c\geq c_1(a)$, we have $r_0-G(c) > (1-a)r_0$.    Thus to ensure  $\eta < \eta_{r_0}$, it suffices to also  select $c$ such that
    \begin{align*}
        \frac{1}{4}L\eta r_0^2 \leq  \hat{F}(f^* + c)(1 -a)r_0.
    \end{align*}
    Given that $\hat{F}$ is strictly increasing with $\lim\limits_{s\to\infty}\hat{F}(s) = \infty$, it is adequate to choose 
    \begin{align*}
        c \geq  c_2(a) := \hat{F}^{-1}\left(\frac{L\eta r_0}{4(1 - a)}\right) - f^*.
    \end{align*}
    Combining both steps, we conclude that $r_k \geq r^*>0$ for all $k\geq 0$ if  
    \begin{align*}
        c \geq  \max\{ c_1(a),\ c_2(a)\} 
    \end{align*}
    for  $a\in (0, 1)$.  It suffices to take $c\geq c^*$, provided  
    \begin{align*}
       c^* =\min\limits_{a\in (0,1)} \max\{c_1(a),\ c_2(a)\} 
    \end{align*}
    exists.     Now, we prove the existence of $c^*$. Note that:  
    \begin{align*}
        c_1'(a) = \frac{r_0}{(f_0 - f^*)\hat{F}''(c_1(a) + f^*)}, \\
        c_2'(a) = \frac{L\eta r_0}{4(1 - a)^2\hat{F}'(c_2(a) + f^*)}. 
    \end{align*}
    By the assumptions on $\hat{F}$, it's evident that $c_1(a)$ is decreasing, and $c_2(a)$ is increasing with respect to  $a$. The existence of $c^*$ will follow if we can demonstrate:   
    \begin{align*}
    \lim\limits_{a\to0^+}c_1(a) > c_2(0) \ \ \text{and} \ \lim\limits_{a\to1^-}c_2(a) > c_1(1).
    \end{align*}
From 
\begin{align*}
    \hat{F}'(c_1(a) + f^*) &= \frac{ar_0}{f_0 - f^*} \Rightarrow \hat{F}'(\lim\limits_{a\to0^+}c_1(a) + f^*) = 0,\\
    \hat{F}(c_2(a) + f^*) &= \frac{L\eta r_0}{4(1 - a)} \Rightarrow \hat{F}(\lim\limits_{a\to1^-}c_2(a) + f^*) = \infty,
\end{align*}
we conclude that 
\begin{align*} 
& \lim\limits_{a\to0^+}c_1(a) = \infty>c_2(0)= \hat{F}^{-1}(\frac{L\eta r_0}{4}) - f^* 
\quad \text{and} \\
& \lim\limits_{a\to1^-}c_2(a) = \infty > c_1(1)=\hat{F}'^{-1}\left(\frac{r_0}{f_0 - f^*}\right) - f^*.
\end{align*} 

\end{proof} 

We now state a non-asymptotic  convergence result of $\|\nabla f_k\|^2$. 
\begin{theorem}
Suppose $f \in C^{1,1}_{L}(\mathbb{R}^d)$ and is bounded below by $f^*$. Consider the update rule (\ref{updating rule}) where $F_k$ is defined via (\ref{ff}) using  $\hat{F}$ that is smooth, strictly increasing with 
$$
\lim\limits_{s\to \infty}\hat{F}(s) = \infty, 
$$
and strictly concave with $$
\lim\limits_{s\to \infty}\hat{F}'(s) = 0.
$$
Then for any $\eta > 0$ and $r_0 > 0$, if the parameter $c$ is chosen such that 
$$
c \geq c^*,  
$$
where $c^*$ is given in Lemma \ref{condition on c for robust}, 
then for any $\epsilon >0$, the gradient norm satisfies
$$
\min\limits_{k \leq N}\| \nabla f_k\|^2 < \epsilon
$$
after at most $N$ iterations with 
    \begin{subequations}
        \begin{align}
        N &= \Bigg\lceil \frac{1}{\epsilon} \Big[\frac{2\bar{F}(f_0 - f^*)}{\eta r^*}\Big] \Bigg \rceil \ \text{if} \ r_0 \leq \frac{F^*}{L\eta}, \\
        N &=  \Bigg\lceil \max \Big\{\frac{ln(\frac{Lr_0\eta}{F^*})}{ln(1 + \underline{\alpha} \eta \epsilon)}, 1 \Big\} + \frac{1}{\epsilon} \Big[\frac{2\bar{F}(\bar{f} - f^*)}{\eta r^*}\Big] \Bigg \rceil \ \text{if} \ r_0 > \frac{F^*}{L\eta}.
        \end{align}
    \label{N}
    \end{subequations} 
\label{non_asymptotic_convergence}
\end{theorem}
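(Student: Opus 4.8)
The plan is to turn the one-step smoothness estimate (\ref{reformulated upper bound of f}) into a genuine descent inequality with a uniform positive coefficient, and then sum it over a suitably chosen block of iterations. Two structural facts come first. Since $c\ge c^*$, Lemma \ref{condition on c for robust} guarantees $r^*>0$, so the limiting energy is positive; and Lemma \ref{important bounds} supplies the uniform bounds $F^*\le F_k\le\bar F$ and $f^*\le f_k\le\bar f$. Combined with the monotonicity $r_{k+1}\le r_k$ (Theorem \ref{unconditional energy stability}), these give the step-size lower bound $\eta_k=\eta r_{k+1}/F_k\ge \eta r^*/\bar F$. The role of the second threshold $\bar c$ is to make this lower bound compatible with the upper bound $1/L$: the requirement $c\ge\bar c$ is equivalent to $F_0\ge L\eta r^*$, and since $\bar F\ge F_0$ this yields $\eta r^*/\bar F\le 1/L$, so the admissible step-size window is nonempty.

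Next I would control the upper bound on $\eta_k$ through the energy. The key observation is that once $r_k\le C:=F^*/(L\eta)$ one has $\eta_k=\eta r_{k+1}/F_k\le \eta r_k/F^*\le 1/L$, hence $1-L\eta_k/2\ge 1/2$, and (\ref{reformulated upper bound of f}) sharpens to
\[
f_{k+1}\le f_k-\tfrac{1}{2}\eta_k\|\nabla f_k\|^2\le f_k-\frac{\eta r^*}{2\bar F}\|\nabla f_k\|^2 .
\]
Summing this genuine descent inequality over any block of iterations and bounding the total decrease by the available budget $f-f^*$ yields an estimate of the form $\min\|\nabla f_k\|^2\le 2\bar F(\,\cdot\,)/(\eta r^*\cdot(\text{block length}))$, which I then invert to read off the iteration count.

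I would then split into the two regimes of the statement. In the case $r_0\le C$, monotonicity gives $r_k\le r_0\le C$ for every $k$, so the descent inequality holds from $k=0$; summing from $0$ to $N-1$ and using $f_0-f^*$ as the budget gives $\min_{k< N}\|\nabla f_k\|^2\le 2\bar F(f_0-f^*)/(\eta r^* N)$, and forcing the right-hand side below $\epsilon$ produces the first count in (\ref{N}). In the case $r_0> C$, I first invoke the two-stage Lemma \ref{two-stage} with this $C$: after $N_1:=\max\{\ln(Lr_0\eta/F^*)/\ln(1+\underline{\alpha}\eta\epsilon),\,1\}$ iterations either $\min_{k\le N_1}\|\nabla f_k\|^2<\epsilon$ already (so $N_1\le N$ and we are done), or $r_{N_1}\le C$. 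In the latter situation the descent inequality holds for all $k\ge N_1$; summing from $N_1$ to $N-1$ and using the uniform bound $f_{N_1}\le\bar f$ from Lemma \ref{important bounds} gives $\min_{N_1\le k<N}\|\nabla f_k\|^2\le 2\bar F(\bar f-f^*)/(\eta r^*(N-N_1))$, and demanding this be below $\epsilon$ yields the second count in (\ref{N}).

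The main obstacle is the interplay between the two step-size bounds across the two phases: the descent argument needs $\eta_k\le 1/L$, which only holds after the energy has decayed below the threshold $C$, whereas the two-stage Lemma \ref{two-stage} is exactly the tool that forces this decay within a controlled number of iterations (or else already delivers a small gradient). Getting the bookkeeping right — that the stabilization phase costs $N_1$ iterations, that the descent phase then starts from the worst-case value $f_{N_1}\le\bar f$, and that the ceilings together with the extra $+1$ in (\ref{N}) absorb the strict-versus-nonstrict inequalities — is the delicate part; the per-step estimates themselves are routine consequences of $L$-smoothness and the bounds already established.
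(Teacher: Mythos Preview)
Your proposal is correct and follows essentially the same route as the paper: establish $r^*>0$ via Lemma \ref{condition on c for robust}, derive the two-sided step-size bounds (using $c\ge\bar c$ for compatibility), invoke Lemma \ref{two-stage} with threshold $C=F^*/(L\eta)$ to reach the stable phase, and then sum the sharpened descent inequality with budget $f_0-f^*$ or $\bar f-f^*$ according to whether $r_0\le C$ or $r_0>C$. The only cosmetic difference is that the paper introduces an auxiliary index $k_0\le N_0$ for the first time $r_{k_0}\le C$ and splits cases on $k_0=1$ versus $k_0>1$, whereas you split directly on the size of $r_0$; your version is in fact slightly cleaner and maps more transparently onto the two formulas in (\ref{N}).
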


\begin{proof}
Under the stated assumptions and the condition $c\geq c^*$, Lemma \ref{condition on c for robust} guarantees that $r^* > 0$. This implies that for all $k\geq 0$, the effective step size satisfies  
    \begin{align}\label{lower_bound_of_etak}
        \eta_k = \eta\frac{r_{k+1}}{F_k} \geq \eta\frac{r^*}{\bar{F}},\quad \forall k > 0.
    \end{align}
    Now fix any $\epsilon>0$, by Lemma \ref{two-stage}, after 
    $$
    N_0 = \Big( \frac{ln(\frac{Lr_0\eta}{F^*})}{ln(1 + \underline \alpha \epsilon \eta)} \Big)_+
    $$ 
    iterations, if the condition $\min\limits_{k \leq N_0} \|\nabla f_k\|^2 < \epsilon$ is not met,  then it must hold that 
    $$
    r_{N_0} \leq \frac{F^*}{L\eta}.
    $$
    Let $k_0 \leq N_0$ be the smallest index such that $r_{k_0} \leq \frac{F^*}{L\eta}$.  
    Then for all $k\geq k_0$, it follows that 
    \begin{align*}
        r_k \leq \frac{F^*}{L\eta}, \; \text{and hence}\; \eta_k = \eta\frac{r_{k+1}}{F_k} \leq  \frac{1}{L}.
    \end{align*}
Combining this with the earlier bound \eqref{lower_bound_of_etak}, we obtain 
$$
\eta\frac{r^*}{\Bar{F}} \leq \eta_k \leq \frac{1}{L}, \quad \forall \; k\geq k_0.  
$$  
Given $c \geq c^*$, and recalling  the decent inequality (\ref{reformulated upper bound of f}), for any $j \geq k_0$,  we have 
    \begin{align*}
        f_{j+1} &\leq f_{j} - \eta_{j}(1 - \frac{L\eta_{j}}{2})\|\nabla f_{j}\|^2 \leq f_{j} - \frac{\eta r^*}{2\bar{F}}\|\nabla f_{j}\|^2,
    \end{align*}
    where the final inequality uses the bounds $\eta_{j} \geq  \eta r^*/\bar F$ and $\eta_{j} \leq 1/L$. Summing the inequality over $j=k_0$ to $k_0+M$, we obtain 
    \begin{align*}
        f_{k_0+M} - f_{k_0} 
        \leq -\frac{\eta r^*}{2\bar{F}}\sum\limits_{j=k_0}^{k_0+M}\| \nabla f_{j} \|^2
        \leq -\frac{\eta r^*}{2\bar{F}} M\min\limits_{j\leq k_0+M}\| \nabla f_{j} \|^2.
    \end{align*}
    Rearranging gives the bound
    \begin{align*}
        \min\limits_{j \leq k_0+M} \|\nabla f_{j}\|^2 
        \leq \frac{2\bar{F}(f_{k_0} - f_{k_0+M})}{M\eta r^*} 
        \leq \frac{2\bar{F}(f_{k_0} - f^*)}{M\eta r^*}. 
    \end{align*}
    {\bf Case 1:} $k_0=0$ \\
    Then it  suffices to choose 
    $$
    M > \frac{1}{\epsilon} \Big[ \frac{2\bar{F}(f_{0} - f^*)}{\eta r^*} \Big],
    $$
    to ensure 
    $\min\limits_{j\leq k_0+M} \|\nabla f_{j}\|^2 < \epsilon$, as claimed in (\ref{N}a).  

    {\bf Case 2:} $k_0>0$ \\
    In this case, we can use the  bound $f_{k_0}\leq \bar f =\sup_k f_k$,
    giving 
    \begin{align*}
        \min\limits_{j \leq k_0+M} \|\nabla f_{j}\|^2 
        \leq \frac{2\bar{F}(f_{k_0} - f^*)}{M\eta r^*}
        \leq \frac{2\bar{F}(\bar{f} - f^*)}{M\eta r^*}.
    \end{align*}
    Since $k_0\leq N_0$, this implies 
    \begin{align*}
        \min\limits_{j \leq N_0+M} \|\nabla f_{j}\|^2 < \epsilon,
    \end{align*}
    as long as 
    $$
    M > \frac{1}{\epsilon}\Big[ \frac{2\bar{F}(\bar{f} - f^*)}{\eta r^*} \Big],
    $$
    which is the result stated in (\ref{N}b).
\end{proof}

\section{Convergence rates}\label{convergence rates}
Given that  $\|\nabla f_k\| \to 0$ as $k\to \infty$, it is natural to investigate the convergence behavior of $\{f(x_k)\}$ and $\{x_k\}$, as well as their respective rates of convergence. To this end,  we introduce a structural condition on the objective function $f$ that facilitates such analysis.

Let $S$ denote the set of all saddle points and local minimizers of $f$. For given  parameters $\mu,\ \delta > 0$ and $\alpha \in (0,2)$, a differentiable function $f:\mathbb{R}^d \to \mathbb{R}$ is said to satisfy the $(\mu, \alpha, \delta)$-KL condition if for every $\tilde x \in S$, and for all $x$ in a neighborhood of $\tilde x$,
the following inequality holds:
\begin{align}\label{KL_condition}
    \|\nabla f(x)\|^2 \geq 2\mu (f(x) - f(\tilde{x}))^{\alpha},
\end{align}
 where all points $x$ in the neighborhood of $\tilde x$ satisfy 
$$
f(\tilde x) \leq f(x) \leq f(\tilde x) +\delta.
$$
Under this condition, the convergence behavior and rate of the sequence $\{x_k\}$ are characterized by the following theorem.

\begin{theorem}(Convergence under the KL condition) Suppose $f \in C^{1,1}_{L}(\mathbb{R}^d)$ is bounded below by $f^*$, and satisfies the $(\mu, \alpha, \delta)$-KL condition (\ref{KL_condition}) for some fixed $\mu, \delta > 0$ and $\alpha \in (0, 2)$. Consider the update rule (\ref{updating rule}) with $\hat{F}: \mathbb{R}^+ \to \mathbb{R}^+$ satisfying the same assumptions as in Theorem \ref{non_asymptotic_convergence}. If the parameter $c$ is chosen such that 
$$
c \geq \max\big\{c^*,\ \Tilde{c} := \Hat{F}^{-1}(L\eta r_0) - f^*\big\},
$$
then for any $\eta > 0$ and $r_0 >0$, the following hold:   

(i) There exists $\tilde x \in S$ and index  $N_1$ (depending on $\delta$) such that
$$
f(\tilde x) \leq f(x_k) \leq f(\tilde x) +\delta, \quad \forall k\geq N_1.
$$
(ii) The sequence $x_k$ converges to $\tilde x$, and the following convergence rates hold: 

(1) If $\alpha = 1$, then 
$$
\|x_k-\tilde x\| \leq C_1 e^{-Qk/2}, \; \forall  \; k\geq N_1.
$$

(2) If $\alpha \in (0, 1)$, then 
$\{x_k\}$ converges to $\tilde x$  in a finite number of steps. 

(3) If $\alpha \in (1, 2)$, then 
$$
\|x_k-\tilde x\| \leq C_2 (C_3+k)^{- \frac{2-\alpha}{2(\alpha-1)}}, 
\quad \forall k\geq N_1.
$$
Here, the constants are defined as follows:
\begin{align*} 
Q & = \frac{\mu\eta r^*}{F_0},\\
C_1 & = 2 \sqrt{\frac{2exp\{QN_1\}w_{N_1}}{\mu}},\\
C_2 & = \sqrt{\frac{2}{\mu}}\Big(\frac{2}{2-\alpha}\Big)\Big[(\alpha-1)Q\Big]^{-\frac{2-\alpha}{2(\alpha - 1)}},\\
C_3 & =\frac{w_{N_1}^{1 - \alpha}}{(\alpha-1)Q}-N_1,
\end{align*} 
where $w_k=f(x_k)-f(\tilde x)$.
\label{converge_rate}
\end{theorem}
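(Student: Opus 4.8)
The plan is to reduce everything to a one-dimensional recursion for the objective gap $w_k = f_k - f(\tilde x)$ together with a trajectory-length estimate, after first pinning down two-sided control of the effective step size. First I would record that the hypothesis $c \geq \tilde c = \hat F^{-1}(L\eta r_0) - f^*$ is equivalent to $F^* \geq L\eta r_0$; since $\{r_k\}$ is non-increasing by Theorem \ref{unconditional energy stability} and $\{f_k\}$ is non-increasing (so $F_k = \hat F(f_k+c) \leq F_0$), this gives $\eta_k = \eta r_{k+1}/F_k \leq \eta r_0/F^* \leq 1/L$ for every $k$, while $c \geq c^*$ and Lemma \ref{condition on c for robust} give $r^* > 0$ and hence $\eta_k \geq \eta r^*/F_0 =: a > 0$. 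With $\eta_k \le 1/L$ the descent inequality (\ref{reformulated upper bound of f}) sharpens to $f_{k+1} \leq f_k - \tfrac{\eta_k}{2}\|\nabla f_k\|^2$, so $f_k \downarrow f_\infty$ and, by (\ref{bounded infinity sum}) together with $\eta_k \ge a$, $\|\nabla f_k\| \to 0$.

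Next I would establish (i) and the convergence of $\{x_k\}$. Boundedness of $\{x_k\}$ yields a subsequence $x_{k_j}\to \tilde x$; since $\|\nabla f_k\|\to 0$ we get $\nabla f(\tilde x)=0$, so $\tilde x \in S$, and continuity plus monotonicity force $f_k \downarrow f(\tilde x)$, whence $w_k\ge 0$ and $w_k\to 0$. Choosing $k_j$ large enough that $x_{k_j}$ lies well inside the KL neighborhood $B(\tilde x,\rho)$ and $w_{k_j}$ is small, I would run the standard trapping induction: assuming $x_i \in B(\tilde x,\rho)$ for $N_1\le i\le k$, the KL inequality is valid along the tail, the length bound below applies, and $\|x_{k+1}-\tilde x\| \leq \|x_{N_1}-\tilde x\| + \tfrac{2}{2-\alpha}\sqrt{2/\mu}\,w_{N_1}^{1-\alpha/2} < \rho$, closing the induction. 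This keeps the whole tail in $B(\tilde x,\rho)$, giving both the value bound in (i) and the applicability of KL for all $k \ge N_1$. I expect this trapping step to be the main obstacle, since it is where the apparent circular dependence (needing KL to bound the path length, and the length bound to remain in the KL region) must be broken, and where a limit point — hence boundedness of the iterates — is required.

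On the trapped tail I would derive the master recursion. The sharpened descent and the KL bound $\|\nabla f_k\|^2 \ge 2\mu w_k^{\alpha}$ give, for $k\ge N_1$, the closed recursion $w_{k+1} \le w_k - Q w_k^{\alpha}$ with $Q = \mu\eta r^*/F_0$ (using $\eta_k \ge \eta r^*/F_0$). I would then solve this in the three regimes: for $\alpha = 1$ it reads $w_{k+1}\le (1-Q)w_k$, so $w_k \le e^{-Q(k-N_1)}w_{N_1}$; for $\alpha\in(0,1)$ the term $Qw_k^{\alpha}$ eventually exceeds $w_k$ once $w_k \le Q^{1/(1-\alpha)}$, forcing $w_{k+1}=0$ and, via the descent inequality, $\nabla f_k = 0$ thereafter, i.e.\ finite-step termination; for $\alpha\in(1,2)$, comparing $w_k^{1-\alpha}$ across steps by convexity of $s\mapsto s^{1-\alpha}$ yields $w_{k+1}^{1-\alpha}-w_k^{1-\alpha}\ge (\alpha-1)Q$, and summing gives $w_k \le [(\alpha-1)Q]^{-1/(\alpha-1)}(C_3+k)^{-1/(\alpha-1)}$ with $C_3 = w_{N_1}^{1-\alpha}/((\alpha-1)Q) - N_1$.

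Finally I would convert the $w_k$-rates into the stated $\|x_k-\tilde x\|$-rates through a single uniform length estimate. Using concavity of $\varphi(s)=s^{1-\alpha/2}$, the descent $w_j-w_{j+1}\ge \tfrac{\eta_j}{2}\|\nabla f_j\|^2$, and $\|\nabla f_j\|\ge \sqrt{2\mu}\,w_j^{\alpha/2}$, one gets $\varphi(w_j)-\varphi(w_{j+1}) \ge \tfrac{(2-\alpha)\sqrt{2\mu}}{4}\|x_{j+1}-x_j\|$; telescoping from $k$ to $\infty$ and using $\|x_k-\tilde x\|\le \sum_{j\ge k}\|x_{j+1}-x_j\|$ gives the key inequality $\|x_k-\tilde x\| \le \tfrac{2}{2-\alpha}\sqrt{2/\mu}\,w_k^{1-\alpha/2}$. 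Substituting the three $w_k$-rates then reproduces $\|x_k-\tilde x\|\le C_1 e^{-Qk/2}$ with $C_1 = 2\sqrt{2 e^{QN_1}w_{N_1}/\mu}$ for $\alpha=1$, finite termination for $\alpha\in(0,1)$, and $\|x_k-\tilde x\|\le C_2(C_3+k)^{-\frac{2-\alpha}{2(\alpha-1)}}$ with $C_2 = \tfrac{2}{2-\alpha}\sqrt{2/\mu}\,[(\alpha-1)Q]^{-\frac{2-\alpha}{2(\alpha-1)}}$ for $\alpha\in(1,2)$, matching the claimed constants.
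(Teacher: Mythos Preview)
Your outline matches the paper's proof: the same two-sided control $\eta r^*/F_0\le\eta_k\le 1/L$, the sharpened descent $f_{k+1}\le f_k-\tfrac{\eta_k}{2}\|\nabla f_k\|^2$, the recursion $w_{k+1}\le w_k-Qw_k^\alpha$ solved in the three regimes, and the telescoped length estimate $\|x_k-\tilde x\|\le\tfrac{2}{2-\alpha}\sqrt{2/\mu}\,w_k^{(2-\alpha)/2}$ to transfer rates to the iterates, with the constants $C_1,C_2,C_3$ falling out exactly as stated.

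Two small remarks. First, your opening paragraph invokes ``$\{f_k\}$ non-increasing'' before it is established; the clean order (which the paper follows) is that the upper bound $\eta_k\le\eta r_0/F^*\le 1/L$ needs only $r_{k+1}\le r_0$ and $F_k\ge F^*$, both unconditional, \emph{then} descent gives $f_k\downarrow$, hence $F_k\le F_0$, and only then the lower bound $\eta_k\ge\eta r^*/F_0$. Second, the paper's KL hypothesis (\ref{KL_condition}) is phrased on the sublevel slice $\{x:f(\tilde x)\le f(x)\le f(\tilde x)+\delta\}$ rather than on a Euclidean ball, so once $w_k\le\delta$ the KL inequality applies directly and no ball-trapping induction is needed. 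Your trapping argument is the standard route for the usual (ball-based) KL formulation and is not wrong, but it is more than the stated hypothesis requires and it leans on boundedness of $\{x_k\}$, which is not among the assumptions.
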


\begin{proof}
We begin by showing that under the given conditions on $c$, the sequence $\{x_k\}$ will enter the region where the KL condition holds and remains in this region thereafter. 
    
Recall the reformulated form of the update rule \eqref{updating rule}:
    \begin{align}
        x_{k+1} = x_{k} - \eta_k\nabla f(x_k),\label{reformulation_1}
    \end{align}
    where the effective step size is defined as
    $$
    \eta_k = \eta\frac{r_{k+1}}{F_k}, \quad \text{with} \quad  r_{k+1} = \frac{r_k}{1+\eta\frac{F'_k}{F_k}\|\nabla f_k\|^2}.
    $$ 
   Since $f\in C_{L}^{1,1}(\mathbb{R}^d)$,
   we can apply the standard descent property of smooth functions to obtain the inequality
\begin{align}\label{GD_fk_scheme}
        f(x_{k+1}) \leq f(x_k) - \eta_k(1 - \frac{L\eta_k}{2})\|\nabla f(x_k)\|^2, \; \forall k\geq 0.
    \end{align}
Now, since  $c \geq c^*$, Lemma \ref{condition on c for robust} guarantees that  $r_k \geq r^* > 0$ for all $k$.  Furthermore,  the condition  $c\geq \tilde c := \hat{F}^{-1}(L\eta r_0) - f^*$ implies  
    $$
    L\eta r_0 \leq \hat F(f^*+c)=:F^*, 
    $$
    so that we obtain the following bounds on the effective step size:
    $$
    0<\eta\frac{r^*}{F_k} \leq \eta_k = \eta\frac{r_{k+1}}{F_k} \leq \eta \frac{r_0}{F^*}\leq \frac{1}{L}.
    $$
This ensures that $\{f_k\}$ is decreasing, and since  $F_k=\hat F(f_k+c)$, it follows that  
$$
F^* \leq F_k: = \Hat{F}(f_k + c) \leq \Hat{F}(f_0 + c) = F_0.
$$  
In particular, using $\eta_k \geq \eta r^*/F_0$, we get a uniform descent estimate:
\begin{align}\label{GD_scheme_etak_upper}
        f(x_{k+1}) \leq f(x_k) - \frac{\eta r^*}{2F_0} \|\nabla f(x_k)\|^2.
    \end{align}
This implies that  $f(x_k) \downarrow \tilde f$ for some limit $\tilde f \geq f^*$, and 
$\|\nabla f(x_k)\| \to 0$ 
   as  $k\to \infty$. By the update formula \eqref{reformulation_1}, it then follows that 
    $
    \|x_{k+1}-x_k\|\to 0.
    $
   
   (i) {\bf Identification of the limit point} $\tilde x$: \\
   Since $f\in C_L^{1, 1}$, the limit  $\tilde f =\lim f(x_k) \geq f^*$ is attained at some point $\tilde x$, with 
    $\tilde f=f(\tilde x)$ and 
    $
    \nabla f(\tilde x)=0. 
    $
    Therefore, $\tilde x \in S$, the set of saddle points and local minimizers. Moreover, $\tilde x$ cannot be a local maximum, since the sequence $\{f(x_k)\}$ is decreasing. 
    For any $\delta>0$, there exists $N_1$, which depends on $\delta$, such that 
    $$
    \tilde f \leq f(x_k)\leq \tilde f +\delta, \quad \forall \; k\geq N_1, 
    $$
    and hence $x_k$ remains in the neighborhood where the KL condition holds. 

    (ii) {\bf Convergence of} $\{x_k\}$. \\ 
Since $\eta_k \leq \frac{1}{L}$, inequality \eqref{GD_fk_scheme} gives
    $$
    f_{k+1} \leq f_k - \frac{\eta_k}{2}\|\nabla f_k\|^2 = f_k - \frac{1}{2}\|x_{k+1} - x_{k}\| \|\nabla f_k\|,
    $$
    where the equality follows from \eqref{reformulation_1}, $x_{k+1}-x_k=-\eta_k \nabla f_k$. 
    Define $w_k:= f(x_k) - f(\tilde x)$. For  all $k\geq N_1$, since $x_k$ lies in the KL region, the $(\mu, \delta, \alpha)$ KL condition gives: 
    \[
    w_{k} - w_{k+1} \geq \sqrt{\frac{\mu}{2}}w_k^{\frac{\alpha}{2}}\Vert x_{k+1} - x_k \Vert.
    \]
    Rearranging, we get 
      \[
   \Vert x_{k+1} - x_k \Vert \le  \sqrt{\frac{2}{\mu}}(w_k - w_{k+1})w_k^{-\frac{\alpha}{2}}.
    \]
    Since $s^{-\frac{\alpha}{2}}$ with $s>0$ is decreasing and $w_{k+1} \leq w_k$, {we bound the right hand side by an integral:}
    \[
    (w_k - w_{k+1})w_k^{-\frac{\alpha}{2}} \leq \int^{w_k}_{w_{k+1}}s^{-\frac{\alpha}{2}}ds = \frac{2}{2 - \alpha}(w_k^\frac{2 - \alpha}{2} - w_{k+1}^\frac{2 - \alpha}{2}).
    \]
   This leads to 
    \[
    \Vert x_{k+1} - x_k \Vert \le \sqrt{\frac{2}{\mu}}(\frac{2}{2 - \alpha})(w_k^\frac{2 - \alpha}{2} - w_{k+1}^\frac{2 - \alpha}{2}).
    \]
    Summing over $k$ from $N_1$ to $\infty$, we obtain 
    \[
    \sum_{k=N_1}^\infty \Vert x_{k+1} - x_k \Vert \leq \sqrt{\frac{2}{\mu}}(\frac{2}{2 - \alpha})w_{N_1}^\frac{2 - \alpha}{2}, \quad \text{for}\; \alpha \in (0,2).
    \]
    This confirms that $\{x_k\}$ is a Cauchy sequence and hence converges.  Moreover, for all $k \geq N_1$, we have 
    \begin{align} \label{d_x_k}
    \|x_k-\tilde x \| 
    & \leq \sum_{j=k}^\infty \|x_j-x_{j+1}\| \nonumber \\
    & \leq 
    \sqrt{\frac{2}{\mu}}(\frac{2}{2 - \alpha})w_{k}^\frac{2 - \alpha}{2}. 
    \end{align}
    We now estimate the convergence rate of $x_k$ via $w_k:=f(x_k)-f(\tilde x)$.
    
    Applying the KL inequality (\ref{KL_condition}) and using (\ref{GD_scheme_etak_upper}), define the constant $Q:= \mu\eta\frac{r^*}{F_0}$, then  
    \begin{align}
    w_{k} - w_{k+1} \ge \eta\frac{r^*}{2F_0} \|\nabla f_k\|^2 \ge Q w_k^\alpha, \quad  \forall \; k \geq N_1. 
    \label{15}
    \end{align}
We now consider three cases for the KL exponent $\alpha \in (0, 2)$: 
    
    (1) {\bf Case  $\alpha = 1$:}\\
    Inequality $\eqref{15}$ becomes 
    \[
    w_{k+1} \leq (1 - Q) w_k, \quad \forall k \geq N_1.
    \]
    By induction, this gives 
    \[
    w_{k} \leq (1 - Q)^{(k-N_1)} w_{N_1} = exp\{(k - N_1) log(1 - Q)\} w_{N_1} \leq \exp \{- (k-N_1)Q \}w_{N_1}.
    \]
  Applying this in \eqref{d_x_k}, we obtain
  exponential convergence of the iterates: 
    $$
    \|x_k-\tilde x\| \leq C_1 e^{-Qk/2}, \quad \forall k\geq N_1, 
    $$
  where 
    $$
    C_1= 2 \sqrt{\frac{2exp\{QN_1\}w_{N_1}}{\mu}} . 
    $$
    
    (2) {\bf Case $\alpha \in (0, 1)$:}\\
    Using the concavity of  $s^{1 - \alpha}$ for $s>0$, and inequality \eqref{15},  we have
    \[
    w_{k+1}^{1 - \alpha} - w_{k}^{1 - \alpha}  \leq (1 - \alpha)w_k^{- \alpha} (w_{k+1} - w_{k})\le -(1 - \alpha)Q.
    \]
    By induction, this leads to 
    \[
    w_{k}^{1 - \alpha} \leq w_{N_1}^{1 - \alpha} - (1- \alpha)(k-N_1)Q.
    \]
    The right hand side becomes negative when 
    $$
    k = k_0 := N_1 + \lceil \frac{w_{N_1}^{1-\alpha}}{(1 - \alpha)Q} \rceil,
    $$
    which is impossible, so $w_k$ must reach zero in finite steps.  Therefore, $x_k \to \tilde x$ in finite number of steps.       

    (3) {\bf Case $\alpha \in (1,2)$:}\\
    Using the concavity of the function $\frac{s^{1 - \alpha}}{1 - \alpha}$ for $s > 0$, and inequality \eqref{15}, we get  
    \[
    \frac{w_{k+1}^{1 - \alpha}}{1 - \alpha} - \frac{w_{k}^{1 - \alpha}}{1 - \alpha} \le  w_k^{-\alpha}(w_{k+1} - w_k) \le - Q, \quad \forall k \geq N_1.
    \]
    By induction, this implies:
    \[
    \frac{w_{k}^{1 - \alpha}}{1 - \alpha} \le \frac{w_{N_1}^{1 - \alpha}}{1 - \alpha} - (k-N_1)Q, \quad \forall k \geq N_1,
    \]
    which rearranges to 
    \begin{align*}
    w_{k} &\le [w_{N_1}^{1 - \alpha} + (\alpha -1)(k-N_1)Q]^\frac{1}{1-\alpha},
    \quad \forall \; k\geq N_1.
    \end{align*}
     Applying this bound to inequality \eqref{d_x_k} we obtain the convergence rate: 
    \begin{align*}  
    \|x_k-\tilde x\| & \leq 
    \sqrt{\frac{2}{\mu}}(\frac{2}{2 - \alpha})w_{k}^\frac{2 - \alpha}{2} \\
    & \leq \sqrt{\frac{2}{\mu}}(\frac{2}{2 - \alpha})
    [w_{N_1}^{1 - \alpha} + (\alpha -1)(k-N_1)Q]^{- \frac{2-\alpha}{2(\alpha-1)}}. 
    \end{align*}   
    This simplifies to the final convergence rate: 
    $$
     \|x_k-\tilde x\|  \leq  C_2 (C_3+k)^{- \frac{2-\alpha}{2(\alpha-1)}}, \; \forall k\geq N_1, 
    $$
   where the constants are defined as 
    $$
    C_2 = \sqrt{\frac{2}{\mu}}\Big(\frac{2}{2-\alpha}\Big)\Big[(\alpha-1)Q\Big]^{-\frac{2-\alpha}{2(\alpha - 1)}}, \quad C_3=\frac{w_{N_1}^{1 - \alpha}}{(\alpha-1)Q}-N_1.
    $$
\end{proof}
\begin{remark}
It is important to emphasize that for any given $\delta>0$, there exists a finite index  $N_1$ such that $f(x_k)\leq f(\tilde x)+\delta$ for all $k\geq N_1$. While an explicit formula for $N_1=N_1(\delta)$ is generally unavailable, a concrete instance occurs when $\delta = f(x_0)-f(\tilde x)$, in which case we can take $N_1 = 0$. 
\end{remark}
\begin{remark} 
The convergence-rate analysis remains valid for all values of $\alpha$. However, if $f\in C^{1, 1}_{L}(\mathbb{R}^d)$,  then the KL condition (\ref{KL_condition}) excludes the possibility $\alpha \in (0, 1)$. 
To see this, let  $y=x-\frac{1}{L}\nabla f(x)$. Applying the descent lemma, we obtain 
$$
f(y)\leq f(x)-\frac{1}{2L}\|\nabla f(x)\|^2.  
$$ 
Hence 
$$
\|\nabla f(x)\|^2 \leq 2L(f(x)-f(y)) \leq 2L(f(x)-f(\tilde x)).
$$
Combing this with the KL inequality (\ref{KL_condition})  yields  
$$
\mu \le L w^{1-\alpha}, \quad  w:=f(x)-f(\tilde x).
$$
As $x\to \tilde x$, we have $w\to 0$. If $\alpha<1$, then $w^{1-\alpha}\to 0$, and the above inequality cannot hold for any fixed $\mu>0$. Therefore, under the L-smoothness assumption, the KL inequality  (\ref{KL_condition})  can hold only if $\alpha \geq 1$. 
\end{remark}

\section{Experimental Results}\label{experimental results}

In this section, we present experiments to validate  the theoretical results and assess the practical performance of the proposed gAEGD method. The section is organized into two parts: the first empirically examines the theoretical properties, while the second compares gAEGD with standard optimization methods including gradient descent with line search, gradient descent with momentum (GDM), conjugate gradient method, and Adam.

\subsection{Validation of Theoretical Findings}

This subsection validates the main theoretical results on two classes of problems: convex and non-convex objectives.

For the convex objective, we consider a 100-dimensional quadratic function defined as: 
\begin{equation*}
f(x_1,x_2,...,x_{100})=\sum\limits_{i=1}^{50}x_{2i-1}^2+\sum\limits_{i=1}^{50}\frac{x_{2i}^2}{10^2}.
\end{equation*}
This quadratic function is  strongly convex with a Lipschitz constant $L=2$. It has a global minimum  at $x^*=(0,0,...,0)$, where $f(x^*)=0$.

For the non-convex objective, we consider the 2D-Rosenbrock, defined as: 
\begin{align*}
    f(x)=(1 - x_1)^2 + b(x_2 - x_1^2)^2,
\end{align*}
where $b>0$ is parameter that controls the condition number of the Hessian $\nabla^2 f$. By default, we set  $b=100$, but  varying $b$ allows us to adjust the difficulty of the problem,  enabling a more comprehensive  evaluation of each algorithm's robustness. As $b$ increases, 
the condition number worsens, making optimization more challenging. 

This function has a global minimum  at $x^* = (1, 1)$, where  $f(x^*) = 0$.  The minimum lies within a long, narrow, parabolic shaped valley, which poses significant difficulty for optimization algorithms in terms of convergence speed and stability.  

For all experiments, we initialize $r_0 = F_0$. The base step size $\eta$ (referred to as ``$lr$'') is fine-tuned individually  for each algorithm to ensure optimal performance. Unless otherwise noted, the parameter $c$ is set to $1$.

In Section \ref{numerical_admissible_energy_funtions}, we examine several examples of admissible energy functions introduced in Section \ref{admissible energy functions}. In Section \ref{numerical_two_stage}, we empirically validate the two-stage behavior established in Lemma~\ref{two-stage} across a range of tasks. Finally, in Section \ref{numerical_c}, we study the effect of the parameter $c$ on the algorithm's robustness with respect to the base step size  $\eta$.

\subsubsection{Admissible Energy Functions} \label{numerical_admissible_energy_funtions}
As discussed in Section \ref{admissible energy functions}, the energy functions admissible under the update rule \eqref{updating rule} must satisfy the conditions outlined  in Assumption \ref{assumptions on F} -- namely, they must be  smooth, strictly increasing, and concave. A variety of functions meet these criteria. For clarity and simplicity, we focus on and discuss the following subset of such functions in this section: 
\begin{itemize}
    \item[(1)] Power energy functions, $\hat{F}(s) = s^p$ with $p = 0.1, 0.2, ..., 1.0$, 
    \item[(2)] Logarithmic energy function, $\hat{F}(s) = log(s + 1)$.
\end{itemize}
Notably, when $\hat{F}(s) = s^{0.5}$, the update rule \eqref{updating rule} reduces to the original AEGD method. When using the logarithmic energy function $\hat{F}(s) = log(s + 1)$, the corresponding algorithm is referred to as the \textit{Adaptive Logarithmic Energy Gradient Descent (ALEGD)} method.

We compare and analyze the performance of various algorithms associated with the energy functions described above, with particular focus on the comparison between AEGD and ALEGD. Our results show the advantages of ALEGD over AEGD, especially in non-convex scenarios.
\label{Performance comparison}
\begin{figure}[H]
    \centering
    \subfigure[]{
    \begin{minipage}[t]{0.5\linewidth}
    \centering
    \includegraphics[width=2.5in]{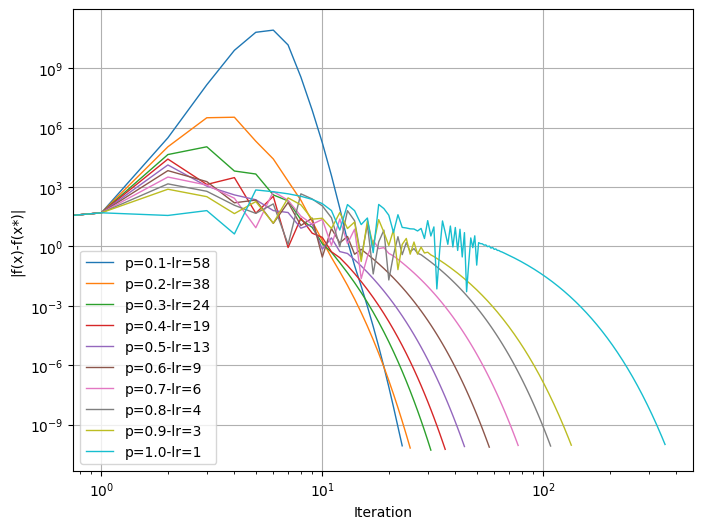}
    \label{quad_loss_a}
    \end{minipage}%
    }%
    \subfigure[]{
    \begin{minipage}[t]{0.5\linewidth}
    \centering
    \includegraphics[width=2.5in]{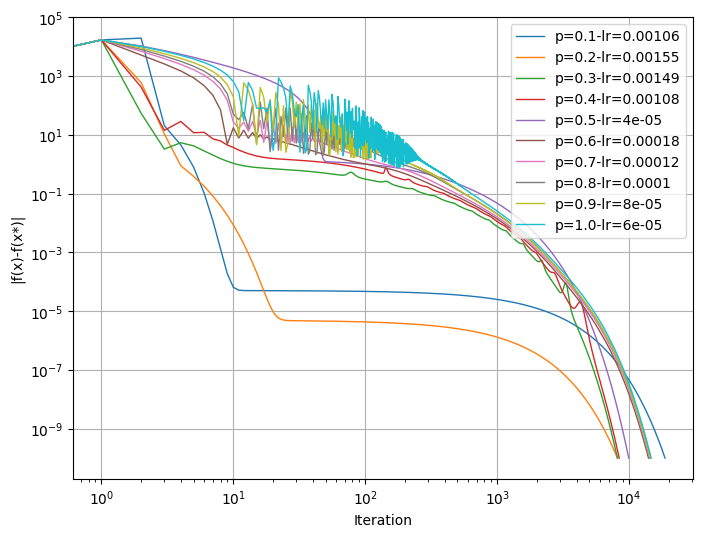}
    \label{quad_loss_b}
    \end{minipage}%
    }%
    \centering
    \caption{Loss comparison of using the update rule \eqref{updating rule} with different power energy functions $\hat{F}(s) = s^p$, where $p = 0.1, 0.2, ..., 1.0$, applied to (a) the 100D quadratic objective,  and (b) the 2D Rosenbrock objective.}
\label{power_energy}
\end{figure}

Figure \ref{power_energy} compares the performance of the update rule defined in \eqref{updating rule} using  various power energy functions $\hat{F}(s) = s^p$. The initial point is set to $(1, 1, ..., 1)$
for the 100-dimensional quadratic objective, and $(-3, -4)$ for the 2D Rosenbrock objective. For the 100D quadratic function, the results reveal a clear trend: as the exponent $p$ decreases from $1.0$ to $0.1$, the optimal base step size increases, and the number of iterations required to reach an accuracy of $10^{-10}$ decreases. This suggests improved convergence behavior for smaller values of $p$ in convex settings.
In contrast, no consistent pattern emerges for the 2D Rosenbrock function. This lack of regularity suggests that a universal trend for the power energy functions does not hold in a non-convex case, likely due to the more complex landscape of the objective.

\begin{figure}[h]
    \centering
    \includegraphics[width=0.45\linewidth]{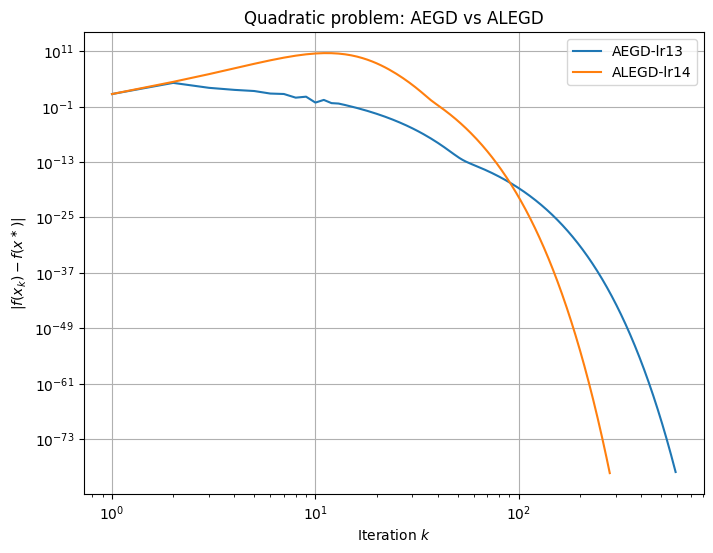}
    \caption{Loss comparison of AEGD and ALEGD on the 100D quadratic objective.}
    \label{qual_alegd_loss}
\end{figure}

Figure \ref{qual_alegd_loss} provides a direct comparison between AEGD and ALEGD on the 100D quadratic objective. The results show that ALEGD outperforms AEGD in the long run, particularly beyond about $90$ iterations or once the accuracy approaches approximately $10^{-18}$. This suggests that ALEGD may offer better long-term convergence properties in convex settings.

\begin{figure}[h]
    \centering
    \includegraphics[width=1\linewidth]{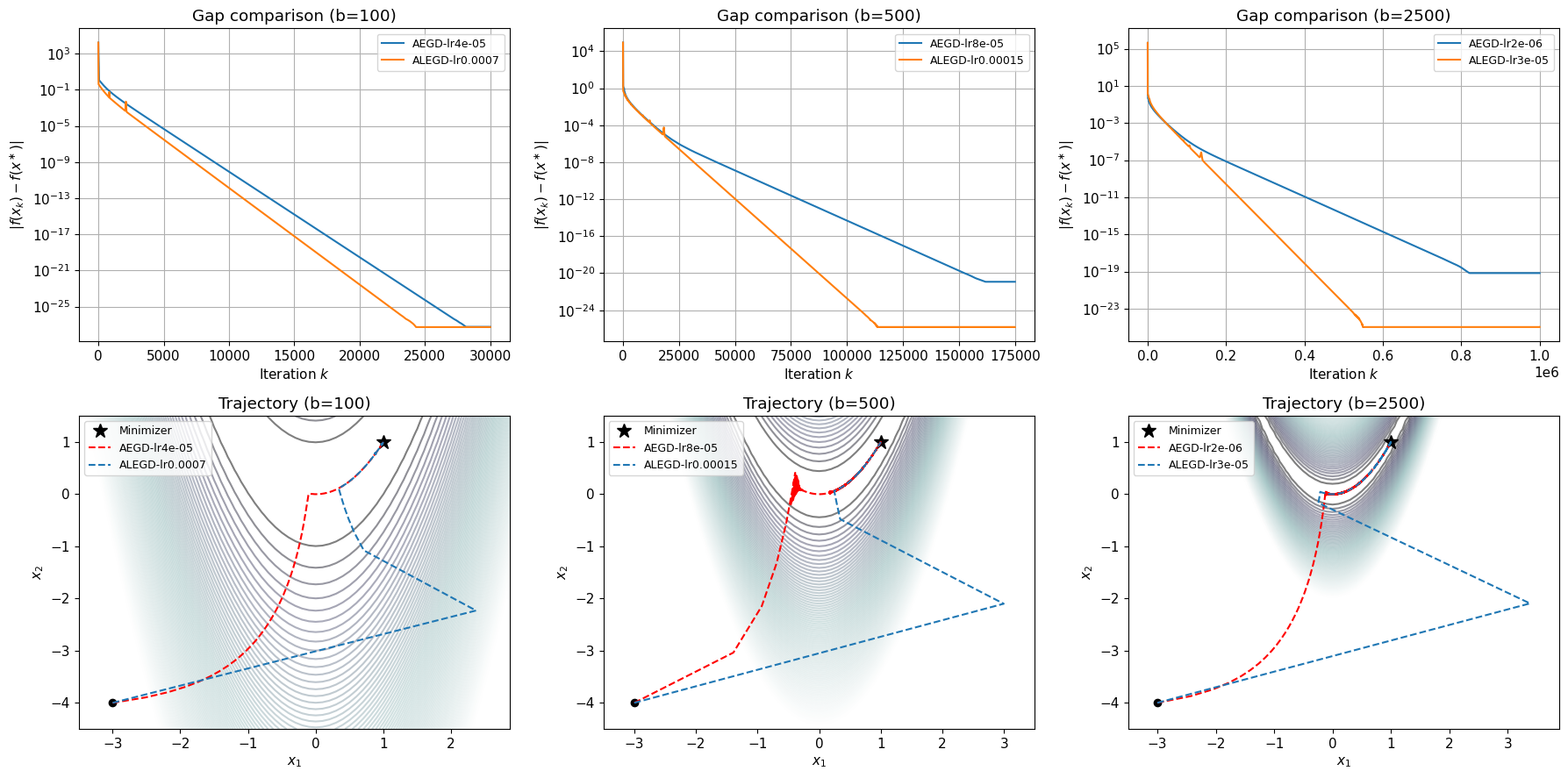}
    \caption{Loss and trajectory comparison of AEGD and ALEGD on the 2D Rosenbrock objective with varying levels of difficulty.}
    \label{rosen_alegd}
\end{figure}

\begin{table}[ht]
\centering
\caption{Average computation time (in seconds, averaged over $10$ runs) required by AEGD and ALEGD to reach an accuracy of $10^{-10}$ on the 2D Rosenbrock problem for different values of $b$.}
\begin{tabular}{lcc}
\toprule
 & AEGD & ALEGD \\
\midrule
$b = 100$  & 0.1517 & 0.1194 \\
$b = 500$  & 0.9452 & 0.5968 \\
$b = 2500$ & 5.6014 & 3.0933 \\
\bottomrule
\end{tabular}
\label{computation_time}
\end{table}

Figure \ref{rosen_alegd} compares the loss curves of AEGD and ALEGD when applied to the 2D Rosenbrock objective function, and displays  the corresponding optimization trajectories. Table \ref{computation_time} summarizes the average computation time required by each algorithm to reach a target accuracy of $10^{-10}$. 

The results show that ALEGD consistently outperforms AEGD in both iteration count and computational time required to reach a desired accuracy, even under high condition number settings. In particular, ALEGD converges more rapidly than AEGD and attains a more accurate final solution in this task.

Although these results are obtained on a specific test problem and may not capture all aspects of performance, they  suggest that ALEGD -- based on the logarithmic energy function $\hat{F}(s) = log(s+1)$ -- is a competitive and effective alternative to AEGD, which employs the square root energy function $\hat F(s)=s^{0.5}$.

Motivated by these observations, we focus on AEGD and ALEGD in the following two subsections to examine the influence of key parameters. Furthermore, in Section~\ref{experiments}, we use AEGD and ALEGD as representative instances of the proposed framework to evaluate overall performance.

\subsubsection{Two-stage behavior} \label{numerical_two_stage}
Figure \ref{quad_two_stage} and  \ref{rosen_two_stage} compare the behavior of the energy parameter $r_k$ and the effective step size  $\eta_k$ for AEGD and ALEGD when applied to different objective functions. 

For the energy parameter $r_k$,
both AEGD and ALEGD exhibit similar convergence patterns in shape; however, they differ in convergence speed and scale.

Regarding the effective step size  $\eta_k$, ALEGD consistently starts with a higher initial value compared to AEGD across both problem types. In the quadratic case, $\eta_k$ for ALEGD converges to a value similar to that of AEGD.  However, in the Rosenbrock case, ALEGD converges to a lower final step size, possibly contributing to its improved stability in non-convex settings.

\begin{figure}[h]
    \centering
    \subfigure[]{
    \begin{minipage}[t]{0.5\linewidth}
    \centering
    \includegraphics[width=2.5in]{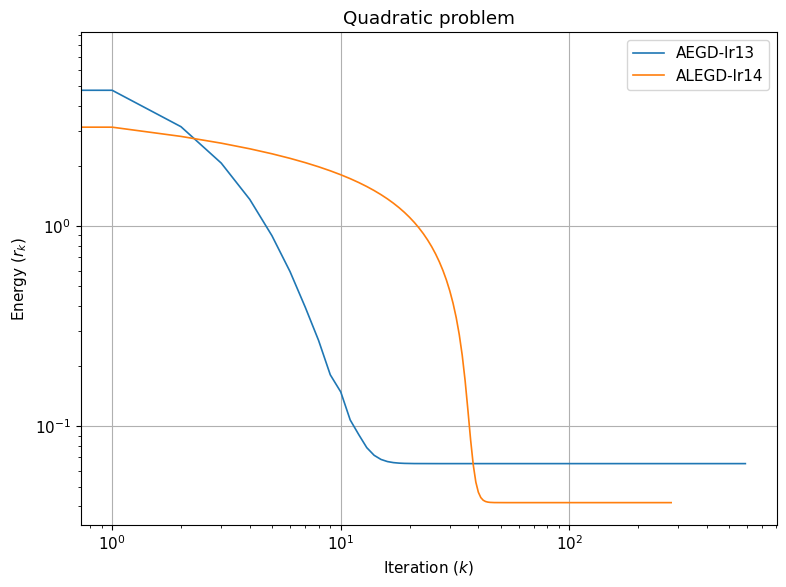}
    \end{minipage}%
    }%
    \subfigure[]{
    \begin{minipage}[t]{0.5\linewidth}
    \centering
    \includegraphics[width=2.5in]{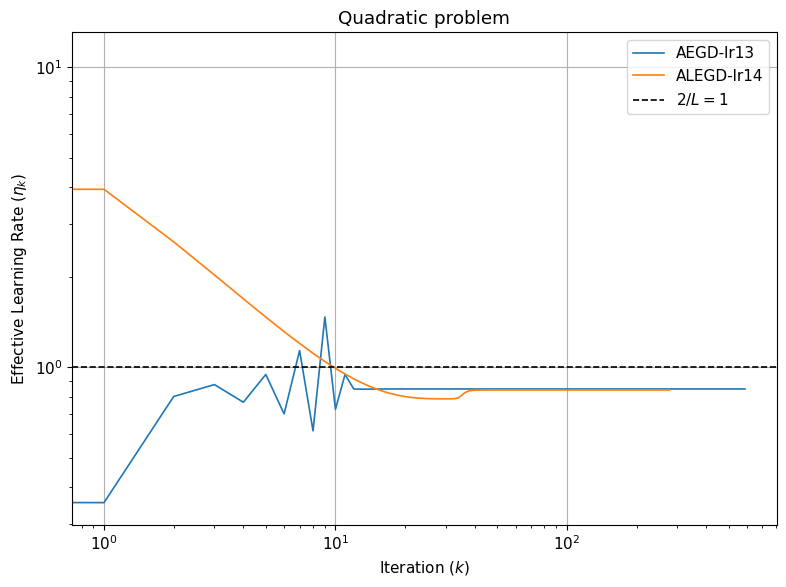}
    \label{quad_two_stage_b}
    \end{minipage}%
    }%
    \centering
    \caption{Comparison of the behavior of the energy parameter ($r_k$) and the effective step size ($\eta_k$) for AEGD and ALEGD when applied to the 100D quadratic objective.}
    \label{quad_two_stage}
\end{figure}

\begin{figure}[h]
    \centering
    \subfigure{
    \begin{minipage}[t]{0.5\linewidth}
    \centering
    \includegraphics[width=2.5in]{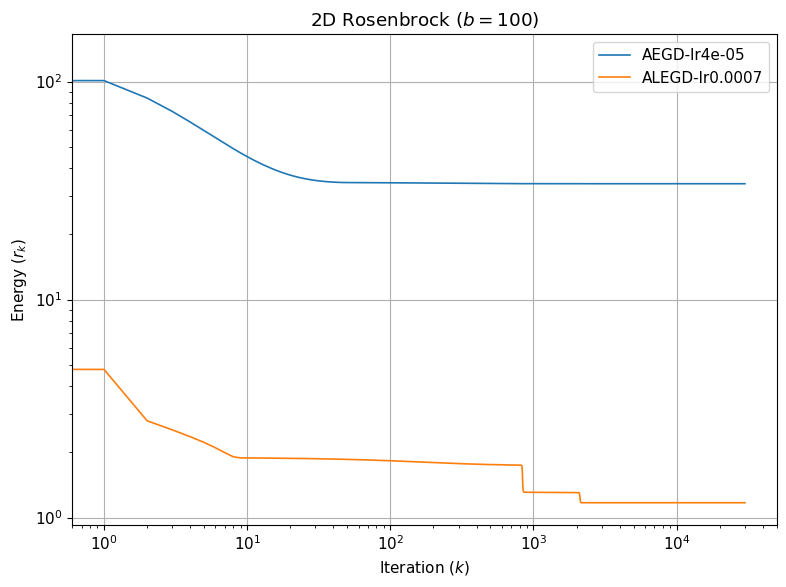}
    \end{minipage}%
    }%
    \subfigure{
    \begin{minipage}[t]{0.5\linewidth}
    \centering
    \includegraphics[width=2.5in]{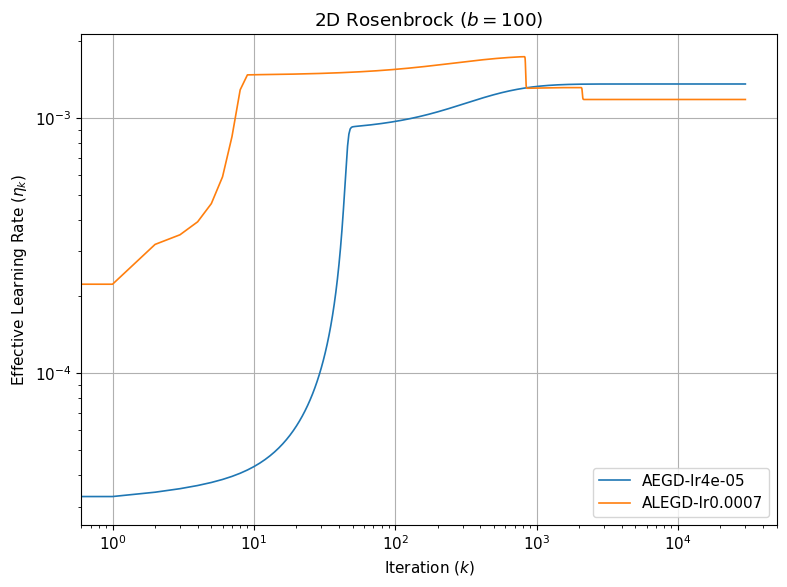}
    \end{minipage}%
    }%
    \centering
    \caption{Comparison of the behavior of the energy parameter ($r_k$) and the effective step size ($\eta_k$) for AEGD and ALEGD when applied to the 2D Rosenbrock objective with $b = 100$.}
    \label{rosen_two_stage}
\end{figure}

In Figure \ref{quad_two_stage_b},  the black dashed line denotes  the upper bound $\frac{2}{L}$ for the step size in standard GD when applied to the quadratic objective. Notably, after about $10$ iterations, the effective step sizes of both AEGD and ALEGD drop below this threshold. This  indicates that the updates become stable, showing consistent descent in the loss values. This trend aligns with the observations in Figure \ref{qual_alegd_loss}, where both AEGD and ALEGD begin showing decrease in loss around the 10th iteration. 

These observations highlight the two-stage adaptive nature of the step size in our algorithm: an initial phase with larger step sizes for rapid progress, followed by a more stable phase with a suitable step size that ensures convergence.

\subsubsection{ Effects of parameter $c$} \label{numerical_c}

Table \ref{c_qual} and \ref{c_rosen} present the impact  of the parameter $c$ on both the optimal base step size and the performance  of AEGD and ALEGD, measured by the number of iterations required to achieve a fixed accuracy of $10^{-7}$. In the tables, ``$lr$'' denotes the initial step size that gives the best performance (i.e., the fastest convergence), and ``$k$'' indicates  the corresponding number of iterations needed to achieve the target accuracy. 

\begin{table}[ht]
\centering
\caption{Optimal base step size ($lr$) and number of iterations ($k$) required to reach an accuracy of $10^{-7}$ for different values of $c$ on the 100D quadratic problem. Left: AEGD. Right: ALEGD.}
\begin{minipage}[t]{0.48\textwidth}
\centering
\begin{tabular}{ccc}
\toprule
$c$ & $lr$ & $k$ \\
\midrule
1    & 13  & 34 \\
10   & 27  & 23 \\
100  & 45  & 11 \\
1000 & 119 & 12 \\
\bottomrule
\end{tabular}

\vspace{0.3em}
{\small (a) AEGD}
\end{minipage}
\hfill
\begin{minipage}[t]{0.48\textwidth}
\centering
\begin{tabular}{ccc}
\toprule
$c$ & $\mathrm{lr}$ & $k$ \\
\midrule
1    & 17  & 53 \\
10   & 56  & 27 \\
100  & 94  & 19 \\
1000 & 131 & 20 \\
\bottomrule
\end{tabular}

\vspace{0.3em}
{\small (b) ALEGD}
\end{minipage}
\label{c_qual}
\end{table}

\begin{table}[ht]
\centering
\caption{Optimal base step sizes ($lr$) and iteration counts ($k$) required to reach an accuracy of $10^{-7}$ for different values of $c$ on the 2D Rosenbrock problem. Left: AEGD. Right: ALEGD.}
\begin{minipage}[t]{0.48\textwidth}
\centering
\begin{tabular}{ccc}
\toprule
$c$ & $lr$ & $k$ \\
\midrule
1    & $4.0\times 10^{-4}$ & 8035 \\
10   & $5.0\times 10^{-4}$ & 7281 \\
100  & $8.0\times 10^{-4}$ & 8028 \\
1000 & $2.9\times 10^{-3}$ & 9347 \\
\bottomrule
\end{tabular}

\vspace{0.3em}
{\small (a) AEGD}
\end{minipage}
\hfill
\begin{minipage}[t]{0.48\textwidth}
\centering
\begin{tabular}{ccc}
\toprule
$c$ & $lr$ & $k$ \\
\midrule
1    & $7.0\times 10^{-4}$ & 5465 \\
10   & $1.0\times 10^{-3}$ & 7765 \\
100  & $1.0\times 10^{-3}$ & 15000 \\
1000 & $1.1\times 10^{-3}$ & 18838 \\
\bottomrule
\end{tabular}

\vspace{0.3em}
{\small (b) ALEGD}
\end{minipage}
\label{c_rosen}
\end{table}

Our results show that, in general, as $c$ increases, the optimal step size tends to increase as well. However, a larger $c$ does not always lead to improved performance in terms of convergence speed. This highlights a trade-off between adaptivity and stability, which varies depending on the problem and algorithm. Therefore, we recommend tuning  $c$ carefully in practice. Selecting an appropriate value can help reduce the burden of fine-tuning the step size while maintaining robust convergence behavior.

\subsection{Experimental Evaluation} \label{experiments} 
We evaluate the proposed generalized AEGD framework, focusing on AEGD and ALEGD, across a range of optimization problems, highlighting their effectiveness, robustness, competitiveness, and practical limitations.  

We begin with deterministic synthetic benchmarks, including quadratic and Rosenbrock functions, to provide baseline comparisons in Section \ref{section_synthetic}. We then evaluate logistic regression in Section \ref{section_LR}. Since noisy objectives naturally motivate stochastic optimization, we further test stochastic variants of the proposed methods on mini-batch logistic regression and image classification tasks in
Section \ref{section_image}, representing convex and nonconvex settings, respectively. Finally,  Section \ref{section_summary} summarizes the main findings.

We use a coordinate-wise implementation, updating the energy variable elementwise for each parameter. This variant has been found to improve empirical performance \cite{liu_aegd_2025}. Unless otherwise specified, we set $c = 1$ for both AEGD and ALEGD in all experiments.

We primarily compare our methods against GDM and Adam. For GDM, the momentum parameter is fixed at $\mu = 0.9$. For Adam, we use the standard hyperparameter settings $\beta_1 = 0.9$ and $\beta_2 = 0.999$. Across all methods, only the base learning rate is tuned,  while all other hyperparameters are kept fixed. 

For the synthetic optimization problems, the base step sizes are selected to achieve the fastest convergence in objective values. For logistic regression and image classification tasks,  learning rates are selected using a two-stage grid search consisting of a coarse search followed by a finer search around the best-performing candidate. For logistic regression, model selection is based on  training loss, whereas for image classification it is based on the highest test accuracy. 

\subsubsection{Baseline Comparison on Synthetic Benchmark Problems}\label{section_synthetic}
In this set of baseline experiments, we compare our methods not only with GDM and Adam, but also with gradient descent equipped with Armijo backtracking line search and the conjugate gradient (CG) method. 

For Armijo backtracking , we use an initial step size $\alpha_0 = 1$, a backtracking factor $\rho = 0.5$, and an Armijo constant $c_1 = 10^{-4}$, with a maximum of $50$ backtracking iterations per update. 

For the quadratic problem, we use the classical CG method with exact line search. For the Rosenbrock problem, we use the Fletcher--Reeves nonlinear CG method combined with Armijo backtracking line search. To improve robustness, a restart strategy is applied whenever the search direction ceases to be a descent direction.

\begin{figure}[h]
    \centering
    \subfigure{
    \begin{minipage}[t]{0.5\linewidth}
    \centering
    \includegraphics[width=2.5in]{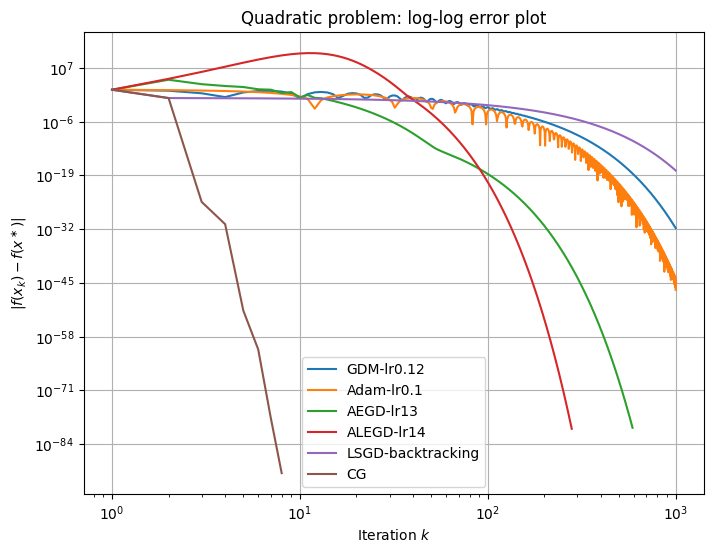}
    \end{minipage}%
    }%
    \subfigure{
    \begin{minipage}[t]{0.5\linewidth}
    \centering
    \includegraphics[width=2.5in]{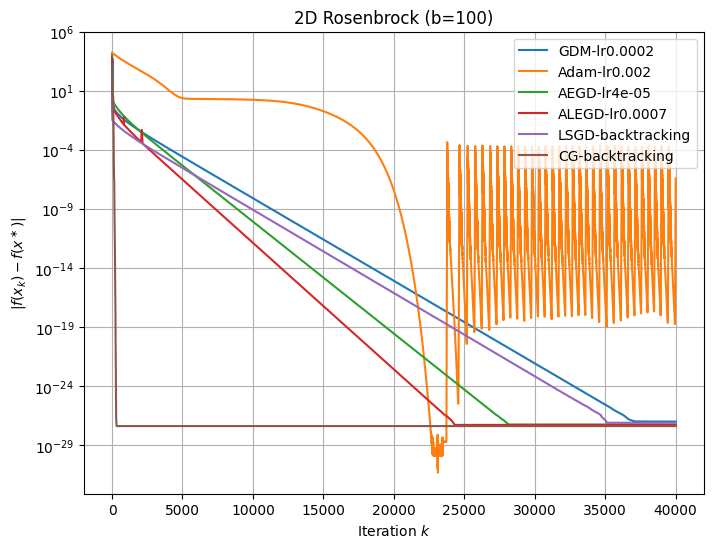}
    \label{quad_two_stage_db}
    \end{minipage}%
    }%
    \centering
    \caption{Loss comparison of GDM, GD with line search, Adam, AEGD and ALEGD on 100D quadratic objective and 2D-Rosenbrock objective with $b = 100$.}
    \label{synthetic}
\end{figure}

The resulting loss comparisons are presented in Figure \ref{synthetic}. As expected, the CG method achieves the fastest convergence on both benchmark problems, reflecting its ability to effectively exploit curvature information in structured optimization settings. 
Among the remaining  methods, AEGD and ALEGD exhibit noticeably faster reduction in objective error then GDM, Adam, and gradient descent with Armijo backtracking, supporting their optimization efficiency. On the Rosenbrock problem, Adam shows an extended  plateau followed by oscillatory behavior near the minimizer, indicating sensitivity to hyperparameter choices when using a fixed learning rate.

These results highlight the improved stability and the accelerated convergence of the proposed energy-based methods. They further suggest that AEGD and ALEGD provide robust and efficient alternatives to GDM, Adam, and GD with line search. At the same time, the proposed methods are not intended to compete with conjugate gradient methods on highly structured optimization problems, where curvature-exploiting techniques are known to be particularly effective.

\subsubsection{Logistic Regression} \label{section_LR} We next consider the standard $\ell_2$-regularized logistic regression problem
\begin{equation}
\min_{w \in \mathbb{R}^d} 
\; f(w) := \frac{1}{n} \sum_{i=1}^n \log\big(1 + \exp(-y_i x_i^\top w)\big)
+ \frac{\lambda}{2} \|w\|^2,
\end{equation}
where $(x_i, y_i) \in \mathbb{R}^d \times \{-1,1\}$ denote the training samples and $\lambda > 0$ is the regularization parameter.

We evaluate the proposed methods on several benchmark datasets from the LIBSVM\footnote{This is from the SVM library LIBSVM available at \url{https://www.csie.ntu.edu.tw/~cjlin/libsvmtools/datasets/}} collection. Specifically, we consider the deterministic setting for all algorithms on the datasets \texttt{a4a}, \texttt{a6a}, and \texttt{a9a}, while stochastic variants (described in Appendix \ref{s-gAEGD}) are evaluated on the large-scale dataset \texttt{w8a}. For each experiment, we report both the training loss and test accuracy, as shown in Figures~\ref{logistic_full} and~\ref{logistic_mini}. The corresponding fine-tuned learning rates are summarized in Table~\ref{tab:lr_logistic}.

All input features are standardized to have zero mean and unit variance, and a bias term is appended to each feature vector. All methods are initialized from the same random point $w_0 \sim 0.01 \mathcal{N}(0, I)$. 

\begin{figure}[h]
    \centering
    \includegraphics[width=1\linewidth]{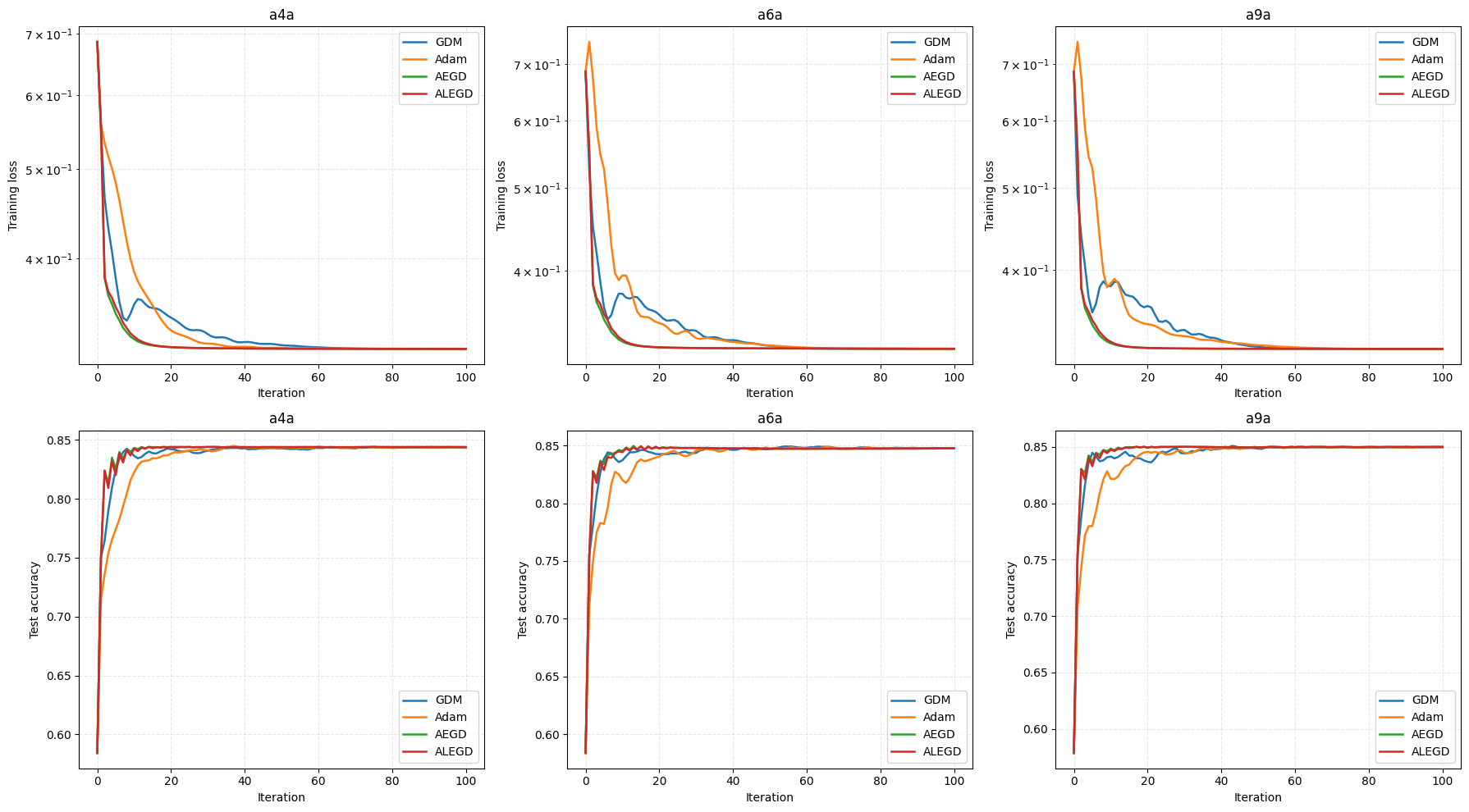}
    \caption{Training loss and test accuracy comparison for full batch logistic regression on datasets a4a, a6a, and a9a.}
    \label{logistic_full}
\end{figure}

The results in Figure \ref{logistic_full} show that, on the Adult datasets (a4a, a6a, a9a), all methods attain comparable final performance in terms of both training loss and test accuracy. However, AEGD and ALEGD converge substantially faster in terms of iteration count.

\begin{table}[ht]
\centering
\caption{Learning rates for logistic regression on LIBSVM datasets.}
\begin{tabular}{lcccc}
\hline
Dataset & SGDM & Adam & AEGD & ALEGD \\
\hline
a4a  & 0.5  & 0.1  & 3.0 & 3.0 \\
a6a  & 0.7  & 0.2  & 3.0 & 3.0 \\
a9a  & 1.0  & 0.2  & 3.0 & 3.0 \\
w8a  & 0.03 & 0.01 & 0.1 & 0.1 \\
\hline
\end{tabular}
\label{tab:lr_logistic}
\end{table}

\begin{figure}[h]
    \centering
    \subfigure{
    \begin{minipage}[t]{0.5\linewidth}
    \centering
    \includegraphics[width=3in]{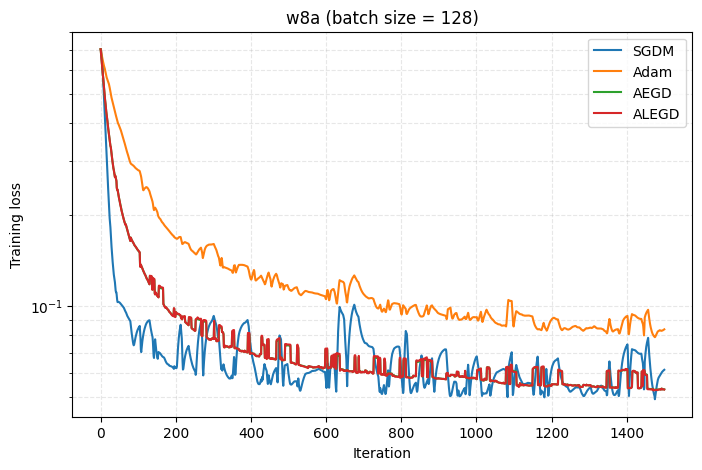}
    \end{minipage}%
    }%
    \subfigure{
    \begin{minipage}[t]{0.5\linewidth}
    \centering
    \includegraphics[width=3in]{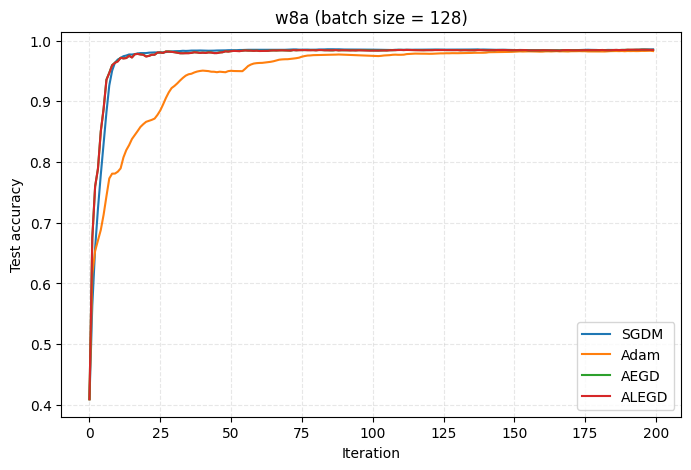}
    \label{quad_two_stage_db}
    \end{minipage}%
    }%
    \centering
    \caption{Training loss and test accuracy comparison for mini-batch logistic regression on dataset w8a. Test accuracy is shown only for the first 200 iterations to better visualize the early-stage convergence behavior.}
    \label{logistic_mini}
\end{figure}

On the w8a dataset with mini-batch training (Figure \ref{logistic_mini}), all methods achieve comparable final test accuracy. Nevertheless, SGDM, AEGD and ALEGD converge faster in terms of iteration count, whereas Adam exhibits slower progress during the early stages of training. In terms of training loss, SGDM shows rapid initial progress but also exhibits noticeable oscillatory behavior during training. Adam, while the most stable method over all, converges considerably more slowly on this task.  In contrast, AEGD and ALEGD achieve a more favorable balance, combining fast convergence with improved stability relative to SGDM. 

Overall, these results suggest that the proposed energy-based methods are well-suited for logistic regression problems. They offer an effective trade-off between convergence speed and optimization stability while maintaining competitive generalization
performance. 

\subsubsection{Image Classification}\label{section_image} We next compare  the proposed methods with SGDM and Adam on image classification tasks, including LeNet-5 on MNIST and ResNet-32 on CIFAR-10. 

For the MNIST experiments, all methods are trained for 50 epochs using a mini-batch size of 128 and a weight decay of $1 \times 10^{-4}$. 

For CIFAR-10, we use a fixed training budget of 200 epochs with weight decay $1 \times 10^{-4}$. The learning rate is decayed by a factor of $0.1$ after $150$ epochs. To evaluate robustness with respect to initialization and stochasticity, each method is run three times using random seeds $0$, $1$, and $2$. Reported results are presented as mean $\pm$ standard deviation across the three runs.

Figure \ref{MNIST} shows the convergence behavior of the different methods on LeNet-5 trained on MNIST, measured in terms of training loss and test accuracy over iterations. Table \ref{mnist} summarizes the corresponding final performance, including best and final test accuracy, as well as training and test losses. In the label, ``LR'' denotes the learning rate used for each method.

\begin{figure}[h]
    \centering
    \subfigure{
    \begin{minipage}[t]{0.5\linewidth}
    \centering
    \includegraphics[width=3in]{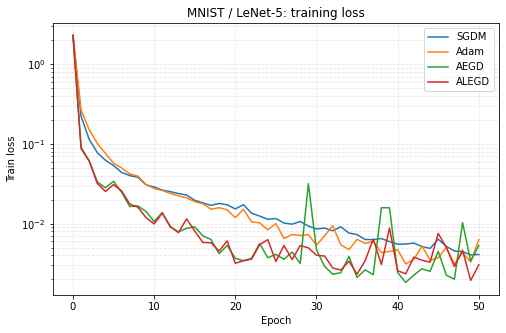}
    \end{minipage}%
    }%
    \subfigure{
    \begin{minipage}[t]{0.5\linewidth}
    \centering
    \includegraphics[width=3in]{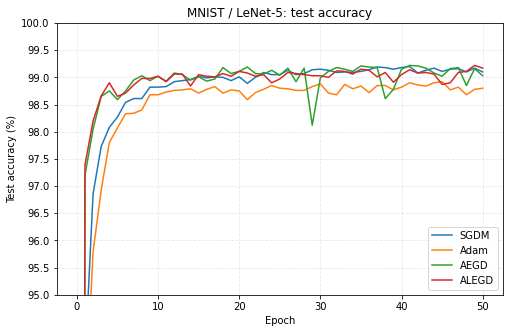}
    \label{quad_two_stage_db}
    \end{minipage}%
    }%
    \centering
    \caption{Training loss and test accuracy comparison for LeNet-5 on MNIST.}
    \label{MNIST}
\end{figure}

\begin{table}[ht] 
\centering
\caption{Final performance comparison on MNIST (LeNet-5).}
\begin{tabular}{lccccc}
\hline
Method & LR & Best Test Acc (\%) & Final Test Acc (\%) & Final Test Loss & Final Train Loss \\
\hline
SGDM  & 0.01   & 99.20 & 99.10 & \textbf{0.028469} & 0.004096 \\
Adam  & 0.0003 & 98.92 & 98.80 & 0.038549 & 0.006296 \\
AEGD  & 0.3    & \textbf{99.22} & 99.03 & 0.032532 & 0.005319 \\
ALEGD & \textbf{0.3}    & \textbf{99.22} & \textbf{99.17} & 0.028964 & \textbf{0.003043} \\
\hline
\end{tabular}
\label{mnist}
\end{table}
The results indicate that all methods achieve comparable final test accuracy on MNIST. However, AEGD and ALEGD exhibit rapid decrease in training loss and quicker improvement  in test accuracy during early stages of training. Although mild oscillations appear in later epochs, the energy-based methods remain competitive throughout training. Among them, ALEGD demonstrates improved stability and slightly better overall performance, achieving the highest test accuracy together with the lowest training loss. 

Figure \ref{CIFAR10} presents the convergence behavior of the methods on CIFAR-10 using ResNet-32, averaged over three independent runs. The corresponding quantitative results are summarized in
Table \ref{cifar10}. As shown,  all methods achieve similar final performance after the scheduled learning rate decay, reaching around 92–93\% test accuracy. However, clear differences are observed in both convergence behavior and final metrics. 

\begin{figure}[h]
    \centering
    \subfigure{
    \begin{minipage}[t]{0.5\linewidth}
    \centering
    \includegraphics[width=3in]{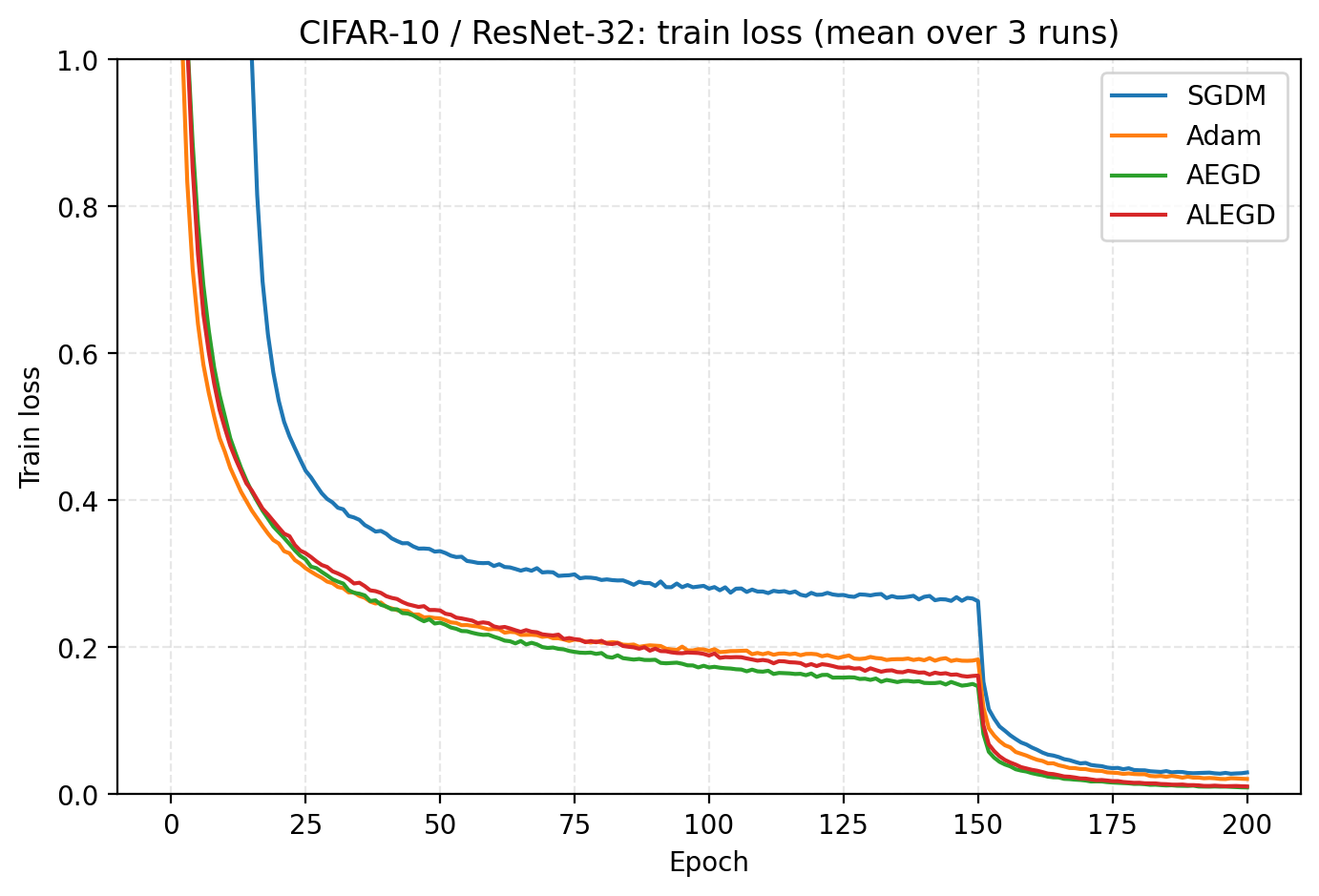}
    \end{minipage}%
    }%
    \subfigure{
    \begin{minipage}[t]{0.5\linewidth}
    \centering
    \includegraphics[width=3in]{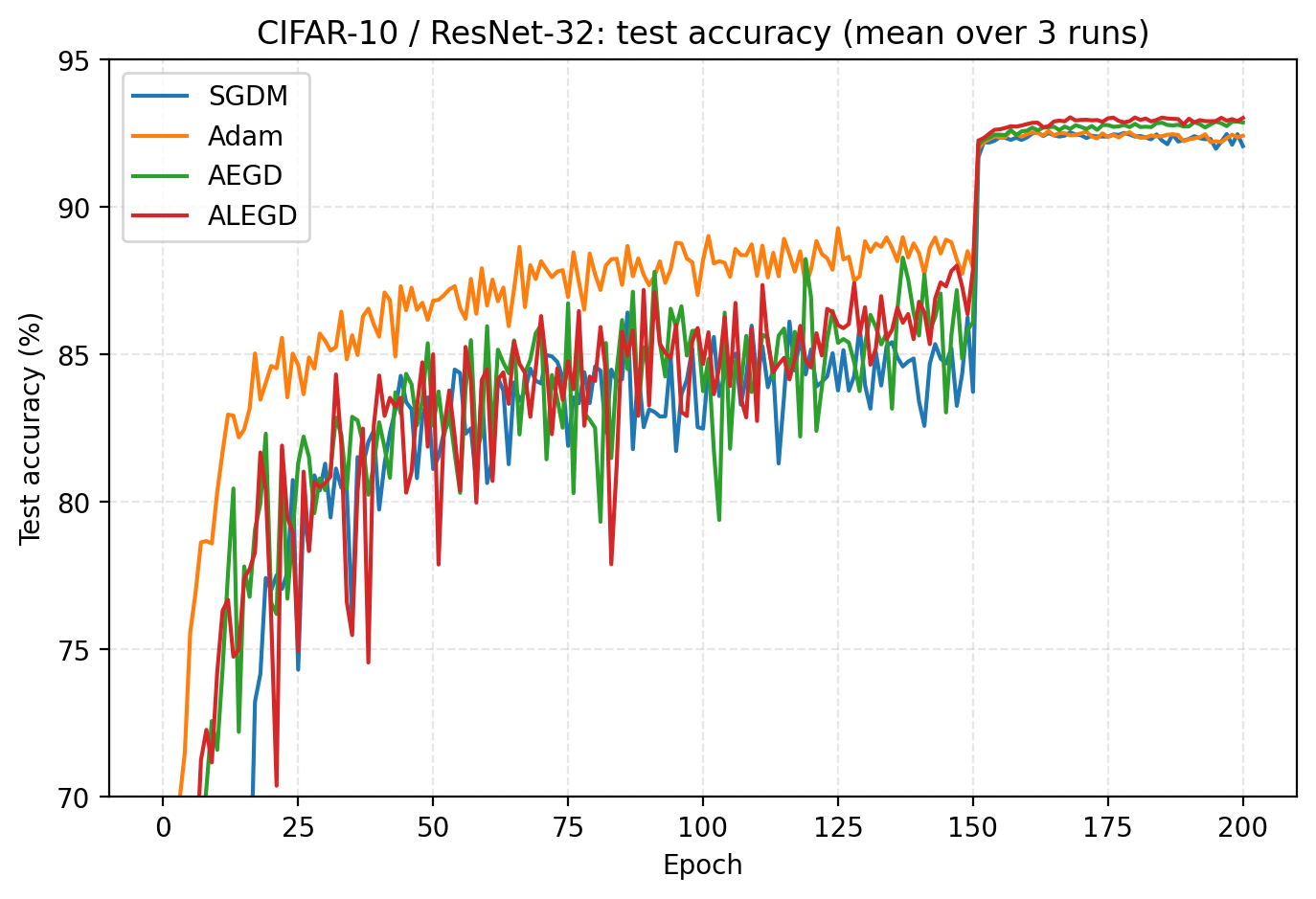}
    \label{quad_two_stage_db}
    \end{minipage}%
    }%
    \centering
    \caption{Training loss and test accuracy of ResNet-32 on CIFAR-10, averaged over three independent runs.}
    \label{CIFAR10}
\end{figure}

\begin{table}[ht]
\centering
\caption{Final performance comparison on CIFAR-10 (ResNet-32), reported as mean $\pm$ standard deviation over 3 runs.}
\begin{tabular}{lccccc}
\toprule
Method & LR & Best Test Acc (\%) & Final Test Acc (\%) & Final Test Loss & Final Train Loss \\
\midrule
SGDM  & 0.2   & $92.70 \pm 0.05$ & $92.07 \pm 0.30$ & $0.3511 \pm 0.0103$ & $0.0297 \pm 0.0020$ \\
Adam  & 0.002 & $92.73 \pm 0.10$ & $92.42 \pm 0.29$ & $0.3543 \pm 0.0021$ & $0.0209 \pm 0.0003$ \\
AEGD  & 0.2   & $93.01 \pm 0.08$ & $92.87 \pm 0.18$ & $0.3020 \pm 0.0224$ & $\mathbf{0.0094 \pm 0.0005}$ \\
ALEGD & 0.7   & $\mathbf{93.15 \pm 0.11}$ & $\mathbf{93.03 \pm 0.21}$ & $\mathbf{0.2963 \pm 0.0177}$ & $0.0108 \pm 0.0006$ \\
\bottomrule
\end{tabular}
\label{cifar10}
\end{table}

In terms of training loss, SGDM exhibits a relatively slow decay, although it becomes comparable to others after learning rate decay. AEGD and ALEGD show a similar convergence behavior to Adam, while achieving lower final training loss in this task. 

For test accuracy, before learning rate decay, Adam demonstrates faster initial progress, whereas SGDM, AEGD, and ALEGD converge more slowly and exhibit some oscillations. After learning rate decay, all methods show similar convergence trends, with AEGD and ALEGD ultimately achieving better best and final generalization performance than SGDM and Adam in this task.

Moreover, the small standard deviations indicate that the proposed methods are also stable across initializations. Notably, ALEGD operates effectively with a significantly larger learning rate, highlighting its robustness and adaptivity. 

\subsubsection{Summary}\label{section_summary}
Overall, the experiments indicate three regimes: (i) on deterministic or moderately stochastic problems, AEGD/ALEGD often converge faster than SGDM and Adam; (ii) in deep learning tasks, they achieve comparable or slightly better generalization with improved stability; (iii) on highly structured problems, curvature-based methods such as CG remain superior. This clarifies the settings where the proposed approach is most effective in practice.

\section{Discussion}
\label{conclusion}
We introduced a broad class of energy-based adaptive gradient methods, extending  the AEGD methodology within a unified framework \eqref{updating rule}, where the energy functions satisfying Assumption \ref{assumptions on F}. Our theoretical results show that the proposed framework exhibits unconditionally energy stability, meaning that energy stability is ensured  regardless of the choice of the base step size. We identify a two-stage behavior where the effective step size first adjusts adaptively,  then stabilizes within a range that guarantees the decay of the objective values. In addition, we establish sufficient conditions on the base step size to ensure convergence and  analyze the method's robustness with respect to its choice. Furthermore, we prove that when the parameter $c$ is chosen sufficiently large, convergence is guaranteed even when step size $\eta$ is not small. For objective functions that also satisfy the KL condition, we derive explicit convergence rates, 
strengthening the theoretical 
guarantees of our framework.

Our experimental results consistently support both the theoretical properties and practical advantages of the proposed energy-based adaptive framework. Across a range of benchmark problems, particularly in non-convex settings, the experiments confirm the predicated two-stage behavior of both the energy parameter $r_k$ and the effective step size $\eta_k$. 
In addition, we observe that increasing the parameter $c$ can relax the selection of the base step size $\eta$, although this does not necessarily lead to improved performance. As such,  we recommend tuning  $c$ appropriately in practice to reduce the effort required for fine-tuning $\eta$.

In terms of practical performance, AEGD and ALEGD generally achieve competitive results across a range of optimization tasks, and in many cases converge faster. However, oscillations may occur in some settings, particularly before learning rate decay. The methods are also not designed to compete with curvature-based approaches such as conjugate gradient on structured problems. Our
experiments further suggest that the logarithmic energy function used in ALEGD can perform more consistently than the square-root energy in AEGD across several benchmarks. Overall, these results indicate that the generalized energy-based approach can serve as a competitive and practical alternative
to existing first-order optimizers in a variety of settings.

The main limitation of our study lies in the absence of a clear guideline for selecting the energy function. Although we proposed a family of algorithms based on various choices of energy functions and observed in preliminary experiments that ALEGD often outperforms AEGD,  a definitive principle for selecting the most suitable energy function for a given objective remains to be established. 

Moreover, our methods belong to the class of local optimization algorithms, and without any intervention or additional conditions, they may get stuck at saddle points. Regarding the saddle points, the Stable Manifold Theorem from dynamical systems theory (as applied in works such as \cite{lee_first-order_2019} and \cite{panageas_gradient_2017}) implies that if the function $f$ satisfies the strict saddle property (i.e., every saddle point has at least one direction with a  strictly negative eigenvalue), then the set of initial conditions that converge to a saddle point has measure zero. In practice, convergence to saddle points can often be avoided by introducing a small amount of noise into the gradient updates \cite{jin_how_2017}. 

\appendix
\section{Stochastic generalized energy-based adaptive gradient method (S-gAEGD)}\label{s-gAEGD}

\begin{algorithm}[h]
\caption{Stochastic Generalized Energy-Based Adaptive Gradient Method (S-gAEGD)}
\label{alg:sgaegd}
\begin{algorithmic}[1]
\Require Initial point $x_0 \in \mathbb{R}^d$, step size $\eta>0$, constant $c>0$, energy function $\hat F:\mathbb{R}_{+}\to\mathbb{R}_{+}$, total number of iterations $T$
\For{$k=0,1,\dots,T-1$}
    \State Sample a mini-batch $B_k$
    \State Compute stochastic gradient $g_k \leftarrow \nabla f_{B_k}(x_k)$ and mini-batch loss $f_k^b \leftarrow f_{B_k}(x_k)$
    \State Set 
    \[
    F_k \leftarrow \hat F(f_k^b + c), \quad
    F_k' \leftarrow \hat F'(f_k^b + c)
    \]
    \If{$k=0$}
        \State Set $r_k \leftarrow F_k$
    \EndIf
    \State Update the energy variable:
    \[
    r_{k+1}
    \leftarrow
    \frac{r_k}{1+\eta \frac{F_k'}{F_k}\|g_k\|^2}
    \]
    \State Update the iterate:
    \[
    x_{k+1}
    \leftarrow
    x_k-\eta \frac{r_{k+1}}{F_k} g_k
    \]
\EndFor
\State \Return $x_T$
\end{algorithmic}
\end{algorithm}

\bibliographystyle{unsrt}
\bibliography{refs} 

@article{liu_aegd_2025,
  title = {{AEGD}: adaptive gradient descent with energy},
  author = {Liu, Hailiang and Tian, Xuping},
  year = {2025},
  journal = {Numerical Algebra, Control and Optimization},
  volume = {15},
  number = {2},
  pages = {315--340},
  issn = {2155-3289, 2155-3297},
  doi = {10.3934/naco.2023015}
}

@book{nesterov_introductory_2004,
  title = {Introductory lectures on convex optimization},
  author = {Nesterov, Yurii},
  editor = {Pardalos, Panos M. and Hearn, Donald W.},
  year = {2004},
  series = {Applied Optimization},
  volume = {87},
  publisher = {Springer US},
  doi = {10.1007/978-1-4419-8853-9}
}

@article{carmon_lower_2020,
  title = {Lower bounds for finding stationary points {I}},
  author = {Carmon, Yair and Duchi, John C. and Hinder, Oliver and Sidford, Aaron},
  year = {2020},
  journal = {Mathematical Programming},
  volume = {184},
  number = {1-2},
  pages = {71--120},
  issn = {0025-5610, 1436-4646},
  doi = {10.1007/s10107-019-01406-y}
}

@article{duchi_adaptive_2011,
  title = {Adaptive subgradient methods for online learning and stochastic optimization},
  author = {Duchi, John and Hazan, Elad and Singer, Yoram},
  year = {2011},
  journal = {Journal of Machine Learning Research},
  volume = {12},
  number = {61},
  pages = {2121--2159},
  issn = {1533-7928}
}

@article{hinton_neural_2012,
  title = {Neural networks for machine learning: lecture 6a overview of mini-batch gradient descent},
  author = {Hinton, Geoffrey and Srivastava, Nitish and Swersky, Kevin},
  year = {2012}
}

@article{kingma_adam_2014,
  title = {Adam: a method for stochastic optimization},
  author = {Kingma, Diederik P. and Ba, Jimmy L.},
  year = {2014},
  journal = {International
Conference on Learning Representations}
}

@article{reddi_convergence_2018,
  title = {On the convergence of {Adam} and {Beyond}},
  author = {Reddi, Sashank J and Kale, Satyen and Kumar, Sanjiv},
  year = {2018},
  journal = {International Conference on Learning Representations}
}

@inproceedings{zaheer_adaptive_2018,
  title = {Adaptive methods for nonconvex optimization},
  booktitle = {Advances in Neural Information Processing Systems},
  author = {Zaheer, Manzil and Reddi, Sashank J. and Sachan, Devendra and Kale, Satyen and Kumar, Sanjiv},
  year = {2018},
  volume = {31},
  publisher = {Curran Associates, Inc.}
}

@inproceedings{bassily_private_2014,
  title = {Private empirical risk minimization: efficient algorithms and tight error bounds},
  booktitle = {2014 IEEE 55th Annual Symposium on Foundations of Computer Science},
  author = {Bassily, Raef and Smith, Adam and Thakurta, Abhradeep},
  year = {2014},
  pages = {464--473},
  publisher = {IEEE},
  doi = {10.1109/FOCS.2014.56}
}

@inproceedings{luo_adaptive_2018,
  title = {Adaptive gradient methods with dynamic bound of learning rate},
  booktitle = {International Conference on Learning Representations},
  author = {Luo, Liangchen and Xiong, Yuanhao and Liu, Yan and Sun, Xu},
  year = {2018}
}

@inproceedings{zhuang_adabelief_2020,
  title = {AdaBelief optimizer: adapting stepsizes by the {Belief} in observed gradients},
  booktitle = {Advances in Neural Information Processing Systems},
  author = {Zhuang, Juntang and Tang, Tommy and Ding, Yifan and Tatikonda, Sekhar and Dvornek, Nicha and Papademetris, Xenophon and Duncan, James S.},
  year = {2020},
  volume = {33},
  pages = {18795--18806},
  publisher = {Curran Associates, Inc.}
}

@article{liu_adaptive_2022,
  title = {An adaptive gradient method with energy and momentum},
  author = {Liu, Hailiang and Tian, Xuping},
  year = {2022},
  journal = {Annals of Applied Mathematics},
  volume = {38},
  number = {2},
  eprint = {2203.12191},
  pages = {183--222},
  issn = {2096-0174},
  doi = {10.4208/aam.OA-2021-0095}
}

@article{liu_anderson_2024,
  title = {Anderson acceleration of gradient methods with energy for optimization problems},
  author = {Liu, Hailiang and He, Jiahao and Tian, Xuping},
  year = {2024},
  journal = {Communications on Applied Mathematics and Computation},
  volume = {6},
  number = {2},
  pages = {1299--1318},
  issn = {2096-6385, 2661-8893},
  doi = {10.1007/s42967-023-00327-0}
}

@article{liu_adaptive_2025,
  title = {Adaptive preconditioned gradient descent with energy},
  author = {Liu, Hailiang and Nurbekyan, Levon and Tian, Xuping and Yang, Yunan},
  year = {2025},
  journal = {Communications in Mathematical Sciences},
  volume = {23},
  number = {5},
  publisher = {International Press of Boston, Inc.},
  doi = {10.4310/CMS.250517004102}
}

@article{polyak_gradient_1963,
  title = {Gradient methods for the minimisation of functionals},
  author = {Polyak, Boris T.},
  year = {1963},
  journal = {USSR Computational Mathematics and Mathematical Physics},
  volume = {3},
  number = {4},
  pages = {864--878},
  issn = {00415553},
  doi = {10.1016/0041-5553(63)90382-3}
}

@inproceedings{karimi_linear_2016,
  title = {Linear convergence of gradient and proximal-gradient methods under the {Polyak-{\L}ojasiewicz} condition},
  booktitle = {Machine Learning and Knowledge Discovery in Databases},
  author = {Karimi, Hamed and Nutini, Julie and Schmidt, Mark},
  editor = {Frasconi, Paolo and Landwehr, Niels and Manco, Giuseppe and Vreeken, Jilles},
  year = {2016},
  pages = {795--811},
  publisher = {Springer International Publishing},
  doi = {10.1007/978-3-319-46128-1_50}
}

@article{lojasiewicz_geometrie_1993,
  title = {Sur la g{\'e}om{\'e}trie semi- et sous- analytique},
  author = {{\L}ojasiewicz, Stanislas},
  year = {1993},
  journal = {Annales de l'institut Fourier},
  volume = {43},
  number = {5},
  pages = {1575--1595},
  issn = {0373-0956},
  doi = {10.5802/aif.1384}
}

@inproceedings{yue_lower_2023,
  title = {On the lower bound of minimizing {Polyak-{\L}ojasiewicz} functions},
  booktitle = {Proceedings of Thirty Sixth Conference on Learning Theory},
  author = {Yue, Pengyun and Fang, Cong and Lin, Zhouchen},
  year = {2023},
  pages = {2948--2968},
  publisher = {Proceedings of Machine Learning Research},
  issn = {2640-3498}
}

@article{xiao_alternating_2023,
  title = {An alternating optimization method for bilevel problems under the {Polyak-{\L}ojasiewicz} condition},
  author = {Xiao, Quan and Lu, Songtao and Chen, Tianyi},
  year = {2023},
  journal = {Advances in Neural Information Processing Systems},
  volume = {36},
  pages = {63847--63873}
}

@inproceedings{foster_uniform_2018,
  title = {Uniform convergence of gradients for non-convex learning and optimization},
  booktitle = {Proceedings of the 32nd International Conference on Neural Information Processing Systems},
  author = {Foster, Dylan J. and Sekhari, Ayush and Sridharan, Karthik},
  year = {2018},
  series = {NIPS'18},
  pages = {8759--8770},
  publisher = {Curran Associates Inc.}
}

@article{li_calculus_2018,
  title = {Calculus of the exponent of {Kurdyka--{\L}ojasiewicz} inequality and its applications to linear convergence of first-order methods},
  author = {Li, Guoyin and Pong, Ting Kei},
  year = {2018},
  journal = {Foundations of Computational Mathematics},
  volume = {18},
  number = {5},
  pages = {1199--1232},
  issn = {1615-3375, 1615-3383},
  doi = {10.1007/s10208-017-9366-8}
}

@article{bassily_exponential_2018,
  title = {On exponential convergence of {SGD} in non-convex over-parametrized learning},
  author = {Bassily, Raef and Belkin, Mikhail and Ma, Siyuan},
  year = {2018},
  journal = {arXiv preprint arXiv:1811.02564},
  eprint = {1811.02564}
}

@article{liu_loss_2022,
  title = {Loss landscapes and optimization in over-parameterized non-linear systems and neural networks},
  author = {Liu, Chaoyue and Zhu, Libin and Belkin, Mikhail},
  year = {2022},
  journal = {Applied and Computational Harmonic Analysis},
  volume = {59},
  pages = {85--116},
  issn = {10635203},
  doi = {10.1016/j.acha.2021.12.009}
}

@inproceedings{scaman_convergence_2022,
  title = {Convergence rates of non-convex stochastic gradient descent under a generic {Lojasiewicz} condition and local smoothness},
  booktitle = {Proceedings of the 39th International Conference on Machine Learning},
  author = {Scaman, Kevin and Malherbe, Cedric and Santos, Ludovic Dos},
  year = {2022},
  pages = {19310--19327},
  publisher = {Proceedings of Machine Learning Research},
  issn = {2640-3498}
}

@article{lee_first-order_2019,
  title = {First-order methods almost always avoid strict saddle points},
  author = {Lee, Jason D. and Panageas, Ioannis and Piliouras, Georgios and Simchowitz, Max and Jordan, Michael I. and Recht, Benjamin},
  year = {2019},
  journal = {Mathematical Programming},
  volume = {176},
  number = {1-2},
  pages = {311--337},
  issn = {0025-5610, 1436-4646},
  doi = {10.1007/s10107-019-01374-3}
}

@article{panageas_gradient_2017,
  title = {Gradient descent only converges to minimizers: non-isolated critical points and invariant regions},
  author = {Panageas, Ioannis and Piliouras, Georgios},
  editor = {Papadimitriou, Christos H.},
  year = {2017},
  journal = {Proceedings of the 8th Innovations in Theoretical Computer Science Conference},
  series = {Leibniz International Proceedings in Informatics (LIPIcs)},
  volume = {67},
  pages = {2:1-2:12},
  publisher = {Schloss Dagstuhl -- Leibniz-Zentrum f{\"u}r Informatik},
  issn = {1868-8969},
  doi = {10.4230/LIPICS.ITCS.2017.2}
}

@inproceedings{jin_how_2017,
  title = {How to escape saddle points efficiently},
  booktitle = {Proceedings of the 34th International Conference on Machine Learning},
  author = {Jin, Chi and Ge, Rong and Netrapalli, Praneeth and Kakade, Sham M. and Jordan, Michael I.},
  year = {2017},
  pages = {1724--1732},
  publisher = {Proceedings of Machine Learning Research},
  issn = {2640-3498}
}

\end{document}